\newtheorem{theorem}{Theorem}
\newtheorem{corollary}{Corollary}
\newtheorem{proposition}{Proposition}
\newtheorem{assumption}{Assumption}
\newtheorem{lemma}{Lemma}
\newtheorem{remark}{Remark}
\newenvironment{definition}{\medskip\noindent{\it Definition. }}{ \medskip}
\newenvironment{proof}{\medskip\noindent{\it Proof. }}{ \medskip}
\def\begequarr{\begin{eqnarray}}
\def\endequarr{\end{eqnarray}}
\def\begequarrs{\begin{eqnarray*}}
\def\endequarrs{\end{eqnarray*}}
\def\begequ{\begin{equation}}
\def\endequ{\end{equation}}
\def\begequs{\begin{equation*}}
\def\endequs{\end{equation*}}
\def\begite{\begin{itemize}}
\def\endite{\end{itemize}}
\def\begcen{\begin{center}}
\def\endcen{\end{center}}
\def\begrem{\begin{remark}\rm}
\def\endrem{\end{remark}}
\def\ba{\begin{array}}
\def\ea{\end{array}}
\def\col{ \mbox{col}\; }
\def\dst{\displaystyle}
\def\beeq#1{\begin{equation}{#1}\end{equation}}
\begin{document}

\begin{frontmatter}

\title{Robust I\&I Adaptive Tracking Control of Systems with Nonlinear Parameterization: An ISS Perspective} 
\vspace{-2.5em}

\author[UNSD]{Lei Wang}\ead{lei.wang2@sydney.edu.au},
\author[UoN]{Christopher M. Kellett}\ead{chris.kellett@anu.edu.au}

\address[UNSD]{Australia Centre for Field Robotics, The University of Sydney, Australia.}
\vspace{-0.5em}
\address[UoN]{School of Engineering, Australian National University, Australia.}

\thanks{This work was supported by the Australian Research Council under ARC-DP160102138.}
\thanks{This work was done while L. Wang and C. Kellett were with School of Electrical Engineering and Computing, University of Newcastle, Australia.}

\vspace{-1.5em}
\begin{keyword}                           
Nonlinear parameterization; Immersion and invariance; Adaptive control; Input-to-state stability
\end{keyword}

\begin{abstract}                          
This paper studies the immersion and invariance (I\&I) adaptive tracking problem for a class of nonlinear systems with nonlinear parameterization in the ISS framework. Under some mild assumptions, a novel I\&I adaptive control algorithm is proposed, leading to an interconnection of an ISS estimation error subsystem and an ISS tracking error subsystem. Using an ISS small-gain condition, the desired uniform global asymptotic stability of the resulting interconnected ``error" system can be achieved and a sum-type strict Lyapunov function can be explicitly constructed. Taking advantage of this ISS-based design framework, it is shown that the corresponding robustness with respect to the input perturbation can be rendered to be ISS. To remove the need to solve the immersion manifold shaping PDE, a new filter-based approach is proposed, which preserves the ISS-based design framework. Finally, we demonstrate the validness of the proposed framework on a tracking problem for series elastic actuators.
\end{abstract}

\end{frontmatter}

\section{Introduction}

Adaptive control has been a subject of significant interest over several decades \cite{Sastry&Bodson(1989)}. Several systematic design approaches have been reported in the literature, such as the Lyapunov-based method \cite{Krstic&PK(1995book),Mario&Tomei(1995book),Ioannou&Sun(2012)}, and the immersion and invariance (I\&I) method \cite{Astolfi&Karagiannis(book2008)}.  A significant difference between these approaches is that the Lyapunov-based method is certainty equivalent, while the I\&I adaptive method is noncertainty equivalent. This latter fact is due to the introduction of an extra term in the definition of the estimation error for the purpose of shaping the immersion manifold.

\vspace{-0.5em}
An adaptive controller design generally involves the design of an estimator that provides an estimate of uncertain parameters. These parameters are then used in a feedback control law such that the desired asymptotic tracking/stabilization is achieved. If extra conditions, for example a persistent excitation (PE) condition \cite{Krstic&PK(1995book)}, are satisfied, then \emph{uniform} asymptotic convergence can be concluded for the closed loop, including the asymptotic estimation of unknown parameters. Taking into account the robustness with respect to uncertainties such as input perturbations, there are several adaptive tracking design methods available in the literature for \emph{linear parameterizations}, such as $\sigma$-modification, parameter projection, or dead-zone modification and dynamic normalization \cite{Ioannou&Sun(2012)} for the Lyapunov-based method, or $\sigma$-modification \cite{Zhao&Xian(TIE2014)} or adding nonlinear damping terms \cite{Chen&Astolfi(cdc18)} for the I\&I method. With these modifications, the resulting closed-loop trajectories can be shown to be robust in the sense of boundedness with respect to a bounded perturbation, while little on the specific robustness, for example the transient performance, can be derived. From a different perspective, it is also worth noting that in \cite{Mazenc&Queiroz(TAC2013)} an explicit strict Lyapunov function was constructed for a class of unperturbed systems  by taking advantage of the cross-term between the tracking and estimation errors. With this construction, in \cite{Wang&Chen&Xu} it is shown that this strict Lyapunov function is an integral input-to-state stability (iISS) Lyapunov function with respect to the input perturbation, leading to a specific iISS robustness. Utilizing this iISS Lyapunov function, one is able to use the iISS small-gain theorem \cite{Ito&Kellett(2018)} to analyze the closed-loop stability and thus solve the so-called uncertainty propagation problem \cite{Wang&Chen&Xu}, particularly when the controlled plant is comprised of multiple interconnections. On the other hand, a similar robustness has also been reported for the I\&I method. In \cite{Wang&Chris(cdc19)} we showed that the parameter estimation error derived from the standard I\&I adaptive method \cite{Astolfi&Ortega(2003)} is iISS with respect to the tracking error, which in turn renders a simpler construction method of the strict Lyapunov function for the closed-loop system and an iISS robustness with respect to the input perturbation. However,  iISS is not an ``ideal" robustness, because the trajectories might be unbounded even with a small perturbation and the use of iISS small-gain theorem \cite{Ito&Kellett(2018)} requires the gain functions to satisfy a very restrictive condition, that  largely limits the class of systems that can be handled. In view of this, a new adaptive framework with  better robustness, such as ISS, is desirable.

\vspace{-1.1em}
The majority of the aforementioned  results on robust adaptive control can only handle linearly parameterized systems.
In order to deal with more general systems with nonlinear parameterization, extra assumptions and/or designs are required in general.
The global adaptive control problem for nonlinearly parameterized systems was solved in \cite{Marino&Tomei(1993)} via output feedback by designing an adaptive gain parameter.  In \cite{Annaswamy&Skantze(1998)}, utilizing the min-max optimization procedure, a new adaptive controller was proposed for systems with a convex/concave parameterization such that the tracking errors  converge to an arbitrarily small set. In \cite{Ge1999} for a class of nonlinear parameterizations, a Lyapunov-based controller and parameter updating law was developed by employing a Lyapunov function in an integral form. In \cite{Liu&Ortega(2010TAC)} for a nonlinear parameterization fulfilling some strict global monotonicity properties, the I\&I adaptive method is used to generate an estimation error subsystem whose zero equilibrium point is globally asymptotically stable.
See also \cite{Wang&Ortega(2015TAC),Lin(2002)} for other interesting progress on adaptive control with nonlinear parameterization.
In spite of these impressive results, it is worth stressing that the robust adaptive control problem with nonlinear parameterization is generally complicated and in fact, somewhat poorly understood.

\vspace{-0.5em}
In this paper, a novel I\&I adaptive tracking control method is developed for a class of nonlinearly parameterized systems in the ISS framework  \cite{Sontag-SCL1998-Integral-Variants,Kellett&Wirth(TAC2016),Sontag(bookchapter2008),Sontag(tac1989)}. Using a (cyclic) small-gain theorem \cite{Ito(CDC2002),LiuHillJiang(Aut2011)}, we demonstrate the desired uniform global asymptotic stability and robustness (i.e., ISS) with respect to the input perturbation. More explicitly, under a \emph{local} monotonicity-like assumption, the standard I\&I adaptive control algorithm is modified by employing the vector saturation function and the dead-zone function so as to yield an ISS estimation error system. If the feedback law is appropriately designed in such a way that the tracking error subsystem is also ISS, and an ISS small-gain condition is satisfied, then the desired uniform global asymptotic stability of the resulting interconnected ``error" system can be achieved and a sum-type strict Lyapunov function can be explicitly constructed. Taking advantage of this ISS-based design framework, we also show that the corresponding robustness with respect to the input perturbation can be rendered to be ISS.

\vspace{-0.5em}
As in the standard I\&I adaptive design \cite{Astolfi&Ortega(2003)}, the immersion manifold shaping PDE needs to be solved to define the estimation error.
The conventional method to remove this constraint is to add a nonlinear dynamic scaling \cite{Liu&Ortega(2011SCL)}, resulting in a dynamic high-gain parameter. It is noted that this high-gain parameter dynamics is fundamentally an unstable system, which prevents the construction of a strict Lyapunov function and thus impedes the robustness analysis for the closed loop. In view of this, we  introduce a filter whose state is used to re-define the estimation error. Consequently, the closed-loop error systems can be transformed into an interconnection of three ISS subsystems in a lower-triangular structure, for which the controller design and stability analysis in the ISS framework can be performed.


\vspace{-0.5em}
This paper is organized as follows. In Section 2 some useful notations and definitions are given and the considered adaptive problem is formulated. For  unperturbed systems, Section 3  presents an ISS-based I\&I adaptive controller design framework, whose robustness with respect to the input perturbation is later analyzed in Section 4. In Section 5, a filter-based approach is given to remove the constraint of solving the immersion manifold shaping PDE. To demonstrate the effectiveness of our approach, the adaptive tracking problem for series elastic actuators is investigated in Section 6. Finally, a brief conclusion is made in Section 7. All technical proofs are presented in the Appendix. Compared to the preliminary version \cite{Wang&Chris(cdc19)},  this paper has improved the design of feedback control in Section 3.2, and further developed the contexts in Sections 4, 5, and 6.
\vspace{-0.5em}

\section{Preliminaries}
\vspace{-0.5em}
\subsection{Notations and definitions}

\vspace{-0.8em}
In this paper, the controlled system will be augmented with a parameter estimator leading to an interconnected system, for which  ISS-Lyapunov functions and  ISS-small-gain theorems can be used  for the controller design and stability analysis. To make the paper self-contained, in the following we present some useful notations, the definition of an ISS-Lyapunov function, and the cyclic-small-gain theorem.

\vspace{-0.5em}
{\bf Notation:} Let $\mathbb{B}_r^q$ denote the ball $\{x\in\mathbb{R}^q: |x|\leq r\}$. A continuous function $\rho:\mathbb{R}_+:=[0,\infty)\rightarrow\mathbb{R}_+$ is said to be of class $\mathcal{PD}$ if $\rho$ is positive definite. A continuous function $\rho:\mathbb{R}_+\rightarrow\mathbb{R}_+$ is said to be of class $\mathcal{SN}$ if $\rho$ is nondecreasing. A function $\rho\in\mathcal{SN}$ is said to be of class $\mathcal{K}$ if $\rho$ is strictly increasing and $\rho(0)=0$. A class $\mathcal{K}$ function is of class $\mathcal{K}_{\infty}$ if it is unbounded. The symbol $\mbox{Id}$ denotes the identity function on $\mathbb{R}_+$.
For a continuous map $\rho:\mathbb{R}_+\rightarrow\mathbb{R}_+$, the map $\rho^{\ominus}:\mathbb{R}_+\rightarrow[0,\infty]$ represents $\rho^{\ominus}(s)=\mbox{sup}\{r\in\mathbb{R}_+:\rho(r)\leq s\}$. Notice that, given a function $\rho\in\mathcal{K}$, by definition, $\rho^{\ominus}(s)=\infty$ holds for all $s\geq\lim_{r\rightarrow\infty}\rho(r)$, and $\rho^{\ominus}(s)=\rho^{-1}(s)$ elsewhere. It is also noted that, in the case of $\rho\in\mathcal{K}\backslash \mathcal{K}_\infty$, we have $\rho\circ\rho^\ominus \leq \mbox{Id}$ and  $\rho^\ominus\circ\rho = \mbox{Id}$. For a continuously differential function $V(t,x(t))$, we denote $\dot V(t,x(t))$ as $\dfrac{d V}{d t}(t,x(t)):=\dfrac{\partial V}{\partial t}(t,x(t)) + \dfrac{\partial V}{\partial x}\dfrac{{d} x(t)}{{d} t}$.

\vspace{-0.5em}We consider the networked time-varying system
\beeq{\label{sys-x-i}
\dot x_i = f_i(t,x,u)\,\quad i=1,\ldots,N
}
where $x=\col(x_1,\ldots,x_N)$ with $x_i\in\mathbb{R}^{n_i}$, and $u\in\mathbb{R}^m$.

\vspace{-0.8em}
\begin{definition}
A continuously differentiable function $V_i:\mathbb{R}_+\times\mathbb{R}^{n_i}\rightarrow \mathbb{R}_+$ is a \emph{uniform} ISS-Lyapunov function for (\ref{sys-x-i}) with respect to inputs $x_j$, $j\neq i$ and $u$ if there exist $\underline{\alpha}_{i},\bar\alpha_i, \alpha_i\in\mathcal{K}_\infty$ and $\sigma_{ij},\sigma_{iu}\in\mathcal{K}$ such that
\begin{subequations}
\begin{align}
&\underline{\alpha}_{i}(|x_i|) \leq V_{i}(t,x_i) \leq \bar\alpha_i(|x_i|)\, \label{P-V-i-a}\\
&V_i(t,x_i)\geq \max_{j=1,\ldots,N;j\neq i}\{\sigma_{ij}(|x_j|),\sigma_{iu}(|u|)\} \, \Longrightarrow\,\nonumber\\
&\qquad \qquad\dst\frac{\partial V_{i}}{\partial t}+\dst\frac{\partial V_{i}}{\partial x_i}f_i(t,x,u) \leq -\alpha_i(|x_i|)\,. \label{P-V-i-b}
\end{align}
\end{subequations}
Then the $x_i$-subsystem (\ref{sys-x-i}) is said to be \emph{uniform} ISS with respect to state $x_i$ and inputs $u$, $x_j$, $j\neq i$ if (\ref{sys-x-i}) admits such a \emph{uniform} ISS-Lyapunov function.
\end{definition}

\vspace{-0.8em}
By setting $\gamma_{ij}=\sigma_{ij}\circ\underline{\alpha}^{-1}_{j}$, $i\neq j$, (\ref{P-V-i-b}) can be replaced by
\[\ba{l}
V_i(t,x_i)\geq \max\limits_{j=1,\ldots,N;j\neq i}\{\gamma_{ij}(|V_j|),\sigma_{iu}(|u|)\} \, \Longrightarrow\,\\
\qquad \qquad\dst\frac{\partial V_{i}}{\partial t}+\dst\frac{\partial V_{i}}{\partial x_i}f_i(t,x,u) \leq -\alpha_i(|x_i|)\,.
\ea
\]
Using the terminology of \cite{SontagWang(1995SCL)} or \cite{Kellett&Wirth(TAC2016)}, an ISS Lyapunov function satisfying (\ref{P-V-i-a})-(\ref{P-V-i-b}) is the implication form. In addition, there are other commonly used forms, such as the dissipation form (see \cite{Kellett&Wirth(TAC2016)} for more explicit discussions on their connections).

\vspace{-0.5em}
To study the stability of the network of $N$ uniform ISS subsystems (\ref{sys-x-i}), there are several methods reported in the literature, such as \cite{LiuHillJiang(Aut2011),Ito&Jiang(TAC2013),Dash&Ruffer(SIAM2010)}.
To ease the subsequent design and analysis, in this paper  the cyclic-small-gain theorem \cite{LiuHillJiang(Aut2011)} will be adapted to the current \emph{time-varying} setting (see Theorem \ref{theo-csgt} below), though it is unclear how to construct the corresponding \emph{smooth} ISS Lyapunov function for the network, in general.

\begin{theorem}\label{theo-csgt}
Consider the time-varying system (\ref{sys-x-i}). Suppose every $i$-th subsystem admits a uniform ISS-Lyapunov function $V_i(t,x_i)$ satisfying (\ref{P-V-i-a}) and (\ref{P-V-i-b}). Then the system (\ref{sys-x-i}) is uniformly ISS with respect to the state $x$ and the input $u$ if for every simple cycle $(\mathcal{V}_{i_1}\mathcal{V}_{i_2}\cdots \mathcal{V}_{i_r}\mathcal{V}_{i_1})$
\beeq{\label{csmc}
\gamma_{i_1i_2}\circ \gamma_{i_2i_3}\circ\cdots\circ\gamma_{i_ri_1} < \mbox{Id}
}
where $r=2,\ldots,N$ and $1\leq i_j\leq N$, $i_j\neq i_{j'}$ if $j\neq j'$. $\square$
\end{theorem}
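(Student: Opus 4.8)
The plan is to reproduce, in the time-varying setting, the Lyapunov proof of the static cyclic-small-gain theorem of \cite{LiuHillJiang(Aut2011)}: build a single max-type ISS-Lyapunov function for the whole network out of the $V_i$ and a suitable family of scalings, and check that every estimate is uniform in $t$. Uniformity comes essentially for free, because (\ref{P-V-i-a})--(\ref{P-V-i-b}) already hold uniformly in $t$ and the scalings introduced below depend only on the gains $\gamma_{ij}$, not on $t$; the time-variation of (\ref{sys-x-i}) therefore adds no difficulty beyond bookkeeping.

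The first and decisive step is to pass from the purely scalar cyclic inequalities (\ref{csmc}) to a \emph{scaling path}: functions $\sigma_1,\dots,\sigma_N\in\mathcal{K}_\infty$, each locally Lipschitz on $(0,\infty)$ with positive derivative there, such that $\max_{j\neq i}\gamma_{ij}(\sigma_j(r))<\sigma_i(r)$ for every $r>0$ and every $i$. This is the monotone-operator content of the cyclic-small-gain theorem: writing the gain operator $\Gamma(s):=\big(\max_{j\neq i}\gamma_{ij}(s_j)\big)_i$, condition (\ref{csmc}) is the ``no cycle with gain $\ge\mbox{Id}$'' property, which for max-form gains is equivalent to $\Gamma(s)\not\ge s$ for all $s\neq0$, and from the latter one constructs a path $\sigma=(\sigma_1,\dots,\sigma_N)$ with $\Gamma(\sigma(r))<\sigma(r)$. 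I would invoke this construction from \cite{LiuHillJiang(Aut2011)} and the small-gain references cited there, paying attention only to the regularity of the $\sigma_i$, which is what the non-smooth Lyapunov bookkeeping below requires; missing $\gamma_{ij}$ (uncoupled pairs) are set to an arbitrarily small class-$\mathcal{K}$ function.

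Given the path, I would set $V(t,x):=\max_{i}\sigma_i^{-1}\big(V_i(t,x_i)\big)$, which is locally Lipschitz and vanishes precisely at $x=0$ (since $V_i(t,0)=0$ by (\ref{P-V-i-a})), and verify the ISS-Lyapunov conditions in implication form. The sandwich bounds $\underline\alpha(|x|)\le V(t,x)\le\bar\alpha(|x|)$ with $\underline\alpha,\bar\alpha\in\mathcal{K}_\infty$ and $t$-independent follow from (\ref{P-V-i-a}) and the equivalence of $|x|$ with $\max_i|x_i|$. For the decrease, take $\chi:=\max_i\sigma_i^{-1}\circ\sigma_{iu}\in\mathcal{K}$ as $u$-gain, assume $V(t,x)\ge\chi(|u|)$, and pick an active index $i$ (one attaining the maximum); with $r:=V(t,x)$ one has $V_i=\sigma_i(r)$ and $V_j\le\sigma_j(r)$ for all $j$, hence $\gamma_{ij}(V_j)\le\gamma_{ij}(\sigma_j(r))<\sigma_i(r)=V_i$ for $j\neq i$, and $\sigma_{iu}(|u|)\le\sigma_i(\chi(|u|))\le\sigma_i(r)=V_i$. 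So the hypothesis of (\ref{P-V-i-b}) is met for the $i$-th subsystem and $\dot V_i\le-\alpha_i(|x_i|)$ along solutions; since $V$ is the pointwise maximum of the locally Lipschitz $W_i:=\sigma_i^{-1}\circ V_i$, its upper-right Dini derivative along solutions obeys $D^+V\le\max_{i\ \mathrm{active}}\dot W_i\le-\max_{i\ \mathrm{active}}(\sigma_i^{-1})'(V_i)\,\alpha_i(|x_i|)$, and using $V_i\le\bar\alpha_i(|x_i|)$ together with $V_i=\sigma_i(V)$ on the active set this is $\le-\tilde\alpha(V)$ for some $\tilde\alpha\in\mathcal{PD}$ independent of $t$. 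The standard comparison-lemma argument — uniform in the initial time since $\underline\alpha,\bar\alpha,\tilde\alpha,\chi$ are $t$-independent — then delivers the uniform $\mathcal{KL}$-type ISS estimate for $x$, which is the assertion.

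The main obstacle is the scaling-path construction of the second step: this is where the cyclic structure is genuinely exploited, and where one must also extract enough regularity of the $\sigma_i$ for the Dini-derivative computation to be legitimate. A lesser, purely technical nuisance is that the decrease rate $\tilde\alpha$ ends up merely positive definite — not class $\mathcal{K}_\infty$ — because of the state-dependent weights $(\sigma_i^{-1})'(V_i)$ and the switching of the active-index set; this still suffices for ISS via comparison functions, but blocks a direct reading of a class-$\mathcal{K}_\infty$ decay rate, and (together with non-smoothness of the maximum) is exactly why one cannot immediately exhibit a \emph{smooth uniform} ISS-Lyapunov function for the network in the sense of the definition above — the point flagged in the remark preceding the theorem.
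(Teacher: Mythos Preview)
The paper does not actually prove Theorem~\ref{theo-csgt}: it is stated as a direct adaptation of the cyclic-small-gain theorem of \cite{LiuHillJiang(Aut2011)} to the time-varying setting and simply invoked thereafter, with no proof in the text or the appendix. Your proposal is therefore not competing with any argument in the paper; rather, you have supplied precisely the proof the paper omits, and your outline faithfully reproduces the max-type Lyapunov construction of \cite{LiuHillJiang(Aut2011)} while correctly noting that uniformity in $t$ is inherited from the $t$-independence of the bounds in (\ref{P-V-i-a})--(\ref{P-V-i-b}) and of the scalings $\sigma_i$. Your closing remark on why this construction yields only a non-smooth (max-type) Lyapunov function also matches the paper's own caveat preceding the theorem.
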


%

\subsection{Problem Formulation}
\label{sec-profor}

\vspace{-0.5em}
Consider adaptive tracking control of nonlinear systems of the form
\beeq{\label{sys-x}
\dot x = f_1(x) + \phi(\theta,x) + g_1(x)u
}
where state $x\in\mathbb{R}^n$, control $u\in\mathbb{R}^m$, and uncertain parameter $\theta\in\Theta\subset\mathbb{R}^q$ with $\Theta$ being a known compact set. Throughout this paper, we make the following assumptions on systems (\ref{sys-x}).
\vspace{-0.5em}
\begin{assumption}\label{ass-1}
The function $\phi(\theta,x)$ is \ $C^1$ (i.e.,continuously differentiable) in $\theta\in\mathbb{R}^q$, for all $x\in\mathbb{R}^n$.
\end{assumption}
\vspace{-0.5em}
\begin{assumption}\label{ass-2}
The tracking reference $x_r(t)$ satisfies the following properties:
\vspace{-1em}
  \begin{itemize}
    \item There exists a constant $r_1>0$ such that $x_r(t)\in\mathbb{B}_{r_1}^n$ holds for all $t\geq0$;
    \item $x_r(t)$ is $C^1$;
    \item $x_r(t)$ and $\dot x_r(t)$ are known a priori.
  \end{itemize}
\end{assumption}

It is noted that the function $\phi(\theta,x)$ is nonlinearly parameterized, with the linear parameterization as a special case.  To ease the subsequent analysis, let $l_\theta$ be
\beeq{\label{ell}
 l_\theta  \geq \max_{\theta\in\Theta} |\theta|\,.
}

\vspace{-1.8em}
Defining the tracking error as $e:=x-x_r$, we have
\beeq{\label{sys-e}
\dot e = f_1(e+x_r) + \phi(\theta,e+x_r) + g_1(e+x_r)u - \dot x_r(t)\,.
}
In this way, the problem at hand becomes one of adaptive stabilization of the \emph{nonautonomous} system (\ref{sys-e}), in the presence of the nonlinear parameterization.
Before we proceed to the explicit design, we make a few important observations.

\vspace{-0.5em}
Define
\vspace{-0.5em}
\beeq{\label{def-tphi}\vspace{-1.5em}
\tilde\phi_s(\theta^\dag,\theta^\ddag,x) := \phi(\mbox{satv}(\theta^\dag+\theta^\ddag),x)-\phi(\mbox{satv}(\theta^\dag),x)\,
}
where $\theta^\dag,\theta^\ddag\in\mathbb{R}^q$, and the function $\mbox{satv}(\cdot)$ is defined as
\vspace{-0.5em}
\[\vspace{-0.5em}
\mbox{satv}(\col(s_1,\ldots,s_q)) = \col(\mbox{sat}(s_1),\ldots,\mbox{sat}(s_q))
\]
with $\mbox{sat}(\cdot)$ denoting a smooth nondecreasing saturation function of the form
\vspace{-0.5em}
\beeq{\label{satv}
\mbox{sat}(s)=
\left\{\ba{ll} s \,,\qquad &|s|\leq l_s \\
s - \mbox{sign}(s)\dst\frac{(|s|-l_s)^2}{2\epsilon_s} \,, &l_s\leq|s|\leq l_s+\epsilon_s\, \\
(l_s+ 0.5\epsilon_s)\mbox{sign}(s) \,,\quad & |s|\geq l_s+\epsilon_s\, \ea\right.
\vspace{-1.5em}
}
in which the saturation level $l_s> l_\theta$ and the margin constant $1\geq\epsilon_s>0$.

\vspace{-0.5em}
With the above definition of the function $\mbox{satv}(\cdot)$, it can be seen that $\theta=\mbox{satv}(\theta)$ for all $\theta\in\Theta$, which yields that there always exists $\gamma_s\in\mathcal{K}\backslash \mathcal{K}_\infty$, satisfying
\vspace{-0.5em} \beeq{\label{L-gamma}
\gamma_s(s) \leq l_{\gamma}\,,\qquad \gamma_s(s) \leq s\,,\quad\forall s\in\mathbb{R}_+\,
\vspace{-0.5em}
}

\vspace{-1.5em} \noindent
with a constant $l_{\gamma}>0$, such that for all $\tilde\theta\in\mathbb{R}^q$, and $\theta\in\Theta$
\beeq{\label{gamma-1}
|\mbox{satv}(\theta+\tilde\theta)-\theta| \leq \gamma_s(|\tilde\theta|)\,.
\vspace{-0.5em}}

\vspace{-1.3em}
Let function $\kappa_1\in\mathcal{SN}$ be such that
\[
\dst\kappa_1(s) \geq \sup_{(\theta,x_r,e)\in\mathbb{B}_{l_s+0.5\epsilon_s}^q\times\mathbb{B}_{r_1}^n\times\mathbb{B}_{s}^n}\left|\frac{\partial\phi(\theta,e+x_r)}{\partial \theta}\right|\,.
\]
Thus we have
\beeq{\label{Ineq-UP}
|\tilde\phi_s(\theta^\dag,\theta^\ddag,e+x_r)| \leq \kappa_1(|e|)\gamma_s(|\theta^\ddag|)\,,
}
for all $\theta^\dag\in\Theta$ and $\theta^\ddag\in\mathbb{R}^q$.


\section{Uniform Global Asymptotic Stability}
\label{sec-MR}

In this section, a new I\&I adaptive controller design paradigm will be proposed for the nonlinearly parameterized system (\ref{sys-e}). With the construction of an ISS interconnected system, we will show that the resulting closed-loop ``error" system is uniformly globally asymptotically stable at the origin.

\subsection{The parameter estimator design}
\vspace{-0.5em}
With the tracking error system (\ref{sys-e}), following the I\&I adaptive design approach \cite{Astolfi&Ortega(2003)}, the estimation error is defined by
\beeq{\label{tilde-theta}
\tilde\theta: = \hat\theta - \theta + \beta(e,x_r)
}

\vspace{-1.6em}\noindent
where $\hat\theta$ denotes the state of the parameter estimator, and the function $\beta(e,x_r)$ is an extra term that will be designed to shape the manifold into which the adaptive system
will be immersed.
The derivative of the estimation error $\tilde\theta$ is then given by
\beeq{\label{tilde-theta-0}\ba{rcl}
\dot{\tilde\theta} = \dot{\hat\theta}  + \dst\frac{\partial\beta}{\partial e}[f_1(x) + \phi(\theta,x) + g_1(x)u - \dot x_r]  + \dst\frac{\partial\beta}{\partial x_r}\dot x_r\,.\\
\ea}

\vspace{-1.5em}
We design the parameter estimator as
\vspace{-1.5em}
\begin{dmath}\label{hat-theta}
\dot{\hat\theta} =  - \dst\frac{\partial\beta}{\partial e}\left[f_1(x) + \phi(\mbox{satv}(\hat\theta+\beta(e,x_r)),x) + g_1(x)u - \dot x_r\right]
 -\dst\frac{\partial\beta}{\partial x_r}\dot x_r -k_{dz}\,\mbox{dzv}(\hat\theta+\beta(e,x_r))
\end{dmath}\vspace{-0.5em}

\vspace{-1.2em}\noindent
in which $k_{dz}>0$ is a design parameter to be fixed later,
and the function $\mbox{dzv}(\cdot)$ is defined as
\[
\mbox{dzv}(\col(s_1,\ldots,s_q)) = \col(\mbox{dz}(s_1),\ldots,\mbox{dz}(s_q))
\]
with $\mbox{dz}(\cdot)$ denoting a smooth dead-zone function of the form
\beeq{\label{def-dz}
\mbox{dz}(s)=\left\{\ba{l} 0 \,,\qquad |s|\leq  l_\theta   \\ (|s|- l_\theta  )^2\left[2( l_\theta  +1)^2-(2 l_\theta  +1)|s|\right]\mbox{sign}(s)\,,\\ \qquad \quad  l_\theta   < |s| <  l_\theta  +1\,\\ s \,,\qquad |s|\geq  l_\theta  +1\, \ea\right.
}
and the dead-zone amplitude $ l_\theta  $ given in (\ref{ell}).

\begin{remark}
Compared with the conventional I\&I estimator \cite{Astolfi&Karagiannis(book2008),Liu&Ortega(2010TAC)}, (\ref{hat-theta}) additionally employs the vector saturation function and the vector deadzone function, which as shown later enables us to obtain a uniform ISS estimation error system, with a mild local assumption. $\square$
\end{remark}

Substituting (\ref{hat-theta}) into  (\ref{tilde-theta-0}) yields
\beeq{\label{tilde-theta-1}
\dot{\tilde\theta} = -\dst\frac{\partial\beta}{\partial e}\tilde\phi_s(\theta,\tilde\theta,x)-k_{dz}\,\mbox{dzv}(\theta+\tilde\theta)\,,
}
with $\tilde\phi_s$ defined by (\ref{def-tphi}).

In the following, we will analyze the stability of the resulting estimation error system (\ref{tilde-theta-1}), based on the following assumption.
\begin{assumption}\label{ass-PE-2}
 There exist $l_s > \sqrt{q} \,l_\theta  $ and a continuous matrix-valued function $\varsigma:\mathbb{R}^n\rightarrow\mathbb{R}^{q\times n}$  such that
  \beeq{\ba{r}\label{ineq-ND}
 (\theta'-\theta)^{\top}\varsigma(x_r)[\phi(\theta',x_r)-\phi(\theta,x_r)]\geq \qquad \qquad \\ (\theta'-\theta)^{\top}M_1(x_r)(\theta'-\theta)\,\geq\, 0\,
    \ea}
holds for all $\theta\in\Theta$ and $|\theta'|\leq l_s$, with some continuous function $M_1:\mathbb{R}^n\rightarrow\mathbb{R}^{q\times q}$ satisfying
        \beeq{\label{ineq-PE}
            \int_{t}^{t+\delta} M_1(x_r(\tau))+M_1^{\top}(x_r(\tau)) d \tau \geq \mu  I\,
        }
for  some constants $\delta>0$ and $\mu >0$, and all $t\geq0$.
\end{assumption}
\begin{remark}
The inequality (\ref{ineq-ND}) in fact demonstrates a property of function $\phi(\theta,x)$ for $x$ being in its steady state and $\theta$ being in a neighborhood of the ball of radius $l_\theta$. Namely, for all $x\in\mathbb{B}_{r_1}^n$, there exists a function $\varsigma(x)$ such that the function $\varsigma(x)\phi(\theta,x)$ is non-decreasing in $\theta\in\mathbb{B}^q=:\{s\in\mathbb{R}^q: |s|\leq  l_\theta +\varepsilon_0\}$ with some $\varepsilon_0>0$.
This local condition is  weaker than that in \cite{Liu&Ortega(2010TAC)}, where a \emph{strict} increase is required for all $\theta\in\mathbb{R}^q$ and $x\in\mathbb{R}^n$. As a particular case, (\ref{ineq-ND}) can always be satisfied if $\phi(\cdot)$ is linearly parameterized.
Inequality (\ref{ineq-PE}) guarantees that the estimator (\ref{hat-theta}) is persistently excited so as to achieve a uniform asymptotic estimation. $\square$
\end{remark}
\vspace{0.5em}

With Assumption \ref{ass-PE-2} in mind, we choose $\beta(e,x_r)$ as
\beeq{\label{beta}
\beta(e,x_r) = \int_{0}^{e} \varsigma(s+x_r)ds
}
with the function $\varsigma$ defined in Assumption \ref{ass-PE-2}.
\begin{remark}
It is noted that to derive the explicit expression of $\beta$, we need to solve the PDE
\beeq{\label{PDE}
\frac{\partial {\beta}(s,x_r)}{\partial s} =\varsigma(s+x_r)\,
}
whose solvability in a general sense is not guaranteed.
This limitation, however, can be overcome by introducing an extra filter, which will be detailed in Section \ref{sec-PDE}.
$\square$
\end{remark}
\vspace{0.5em}
With  (\ref{beta}), the estimation error system (\ref{tilde-theta-1}) reduces to
\beeq{\label{tilde-theta-2}
\dot{\tilde\theta} = H(\theta,\tilde\theta,x_r) + \Delta(\theta,\tilde\theta,x_r,e)
}
where for compactness we define
\beeq{\label{def-H}
H(\theta,\tilde\theta,x_r)=-\varsigma(x_r)\tilde\phi_s(\theta,\tilde\theta,x_r)-k_{dz}\mbox{dzv}(\tilde\theta+\theta)
}
and
$$
\Delta(\theta,\tilde\theta,x_r,e)  = \varphi(\mbox{satv}(\theta),x_r,e+x_r)-\varphi(\mbox{satv}(\theta+\tilde\theta),x_r,e+x_r)
$$
with
$
\varphi(\theta^\dag,x_r,e^\dag)=\varsigma(x_r)\phi(\theta^\dag,x_r)-\varsigma(e^\dag)\phi(\theta^\dag,e^\dag)\,.
$

An instrumental property of function $\varphi$ is formulated as below, with the proof  given in Appendix \ref{app-varphi}.
\begin{lemma}\label{lemma-varphi}
Suppose that Assumptions \ref{ass-1} and \ref{ass-2} hold. There exists a function $\kappa_2\in\mathcal{K}$ such that
\beeq{\label{bound-varphi}
|\varphi(\theta^\dag,x_r,e+x_r)-\varphi(\theta^\ddag,x_r,e+x_r)| \leq \kappa_2(|e|)|\theta^\dag-\theta^\ddag|\,
}
for all $x_r\in\mathbb{B}^n_{r_1}$, $e\in\mathbb{R}^n$ and $\theta^\dag,\theta^\ddag\in\mathbb{B}^q_{l_s+0.5\epsilon_s}$. $\square$
\end{lemma}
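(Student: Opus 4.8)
\medskip\noindent\emph{Proof sketch.}
The plan rests on one structural observation: $\varphi(\theta^\dag,x_r,x_r)=\varsigma(x_r)\phi(\theta^\dag,x_r)-\varsigma(x_r)\phi(\theta^\dag,x_r)=0$ for every $\theta^\dag$, so the left-hand side of (\ref{bound-varphi}) vanishes at $e=0$; this is exactly what makes a class $\mathcal{K}$ (rather than merely bounded) dependence on $|e|$ possible, while the factor $|\theta^\dag-\theta^\ddag|$ will be produced from the $C^1$-in-$\theta$ regularity of Assumption \ref{ass-1}. First I would extract the $\theta$-increment: since $\theta^\dag,\theta^\ddag\in\mathbb{B}^q_{l_s+0.5\epsilon_s}$ and this ball is convex, the segment $\theta_\tau:=\theta^\ddag+\tau(\theta^\dag-\theta^\ddag)$, $\tau\in[0,1]$, stays inside it, so writing $G(y):=\int_0^1\frac{\partial\phi}{\partial\theta}(\theta_\tau,y)\,d\tau$ (which also depends on $\theta^\dag,\theta^\ddag$) one has $\phi(\theta^\dag,y)-\phi(\theta^\ddag,y)=G(y)(\theta^\dag-\theta^\ddag)$ for all $y\in\mathbb{R}^n$. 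Substituting into the definition of $\varphi$ then yields the factorization
\begin{align*}
&\varphi(\theta^\dag,x_r,e+x_r)-\varphi(\theta^\ddag,x_r,e+x_r)\\
&\qquad=\bigl[\varsigma(x_r)G(x_r)-\varsigma(e+x_r)G(e+x_r)\bigr](\theta^\dag-\theta^\ddag),
\end{align*}
so it only remains to bound the bracketed $q\times q$ matrix by a class $\mathcal{K}$ function of $|e|$.

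\medskip\noindent
To that end I would set $F(y):=\varsigma(y)G(y)$ --- jointly continuous in $y,\theta^\dag,\theta^\ddag$ since $\varsigma$ is continuous and $\partial\phi/\partial\theta$ is continuous (the latter being already implicit in the supremum defining $\kappa_1$ in (\ref{Ineq-UP})) --- and define
\begin{align*}
\kappa_2(s):=\sup\bigl\{\,&|F(x_r)-F(y)|:\ x_r\in\mathbb{B}^n_{r_1},\ |y-x_r|\le s,\\
&\ \theta^\dag,\theta^\ddag\in\mathbb{B}^q_{l_s+0.5\epsilon_s}\,\bigr\}.
\end{align*}
For each fixed $s$ the admissible $y$ lie in the compact ball $\mathbb{B}^n_{r_1+s}$, so $\kappa_2(s)<\infty$; moreover $\kappa_2$ is nondecreasing and $\kappa_2(0)=0$ (take $y=x_r$). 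Uniform continuity of $F$ on the compact set $\mathbb{B}^n_{r_1+1}\times\mathbb{B}^q_{l_s+0.5\epsilon_s}\times\mathbb{B}^q_{l_s+0.5\epsilon_s}$ forces $\kappa_2(s)\to0$ as $s\to0^+$, so $\kappa_2$ is dominated by some function of class $\mathcal{K}$ (indeed $\mathcal{K}_\infty$); replacing $\kappa_2$ by that majorant and using $|e|=|(e+x_r)-x_r|$ with $x_r\in\mathbb{B}^n_{r_1}$, the factorization above delivers $|\varsigma(x_r)G(x_r)-\varsigma(e+x_r)G(e+x_r)|\le\kappa_2(|e|)$ and hence (\ref{bound-varphi}) for all $e\in\mathbb{R}^n$.

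\medskip\noindent
The hard part --- really the only place where care is needed --- is that $e$ ranges over all of $\mathbb{R}^n$, so $F(e+x_r)$ need not be bounded; this is handled by invoking compactness at two scales: finiteness of $\kappa_2(s)$ comes from compactness of $\mathbb{B}^n_{r_1+s}$ for each individual $s$ (so $\kappa_2$ is allowed to grow without bound, which is harmless), whereas the essential property $\kappa_2(0^+)=0$ comes from uniform continuity on the single fixed compact neighbourhood $\{|e|\le1\}$ of the origin. I expect the remaining estimates to be routine. Note, finally, that this argument uses only Assumption \ref{ass-1} and the continuity of $\varsigma$; Assumption \ref{ass-PE-2} plays no role here.
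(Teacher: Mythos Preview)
Your proposal is correct and follows essentially the same route as the paper: both exploit the key observation $\varphi(\theta',x_r,x_r)\equiv0$, apply a mean-value argument in $\theta$ over the convex ball $\mathbb{B}^q_{l_s+0.5\epsilon_s}$ to extract the factor $|\theta^\dag-\theta^\ddag|$, and then define $\kappa_2$ as a supremum over compact sets that vanishes at $s=0$. The only cosmetic differences are that the paper applies the (componentwise) mean-value theorem directly to $\varphi$ rather than to $\phi$, and defines $\kappa_2$ via the Jacobian $\partial\varphi_i/\partial\theta'$ instead of your modulus-of-continuity of $F=\varsigma G$; your integral-remainder factorization and explicit majorization by a $\mathcal{K}$ function are in fact slightly cleaner.
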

\vspace{0.5em}
With the above lemma, recalling (\ref{L-gamma}) and (\ref{gamma-1}), we have
\beeq{\ba{rcl}\label{bound-Delta-2}
|\Delta(\theta,\tilde\theta,x_r,e)|&\leq& \kappa_2(|e|)\gamma_s(|\tilde\theta|)\leq l_{\gamma}\kappa_2(|e|)\,.
\ea}

We now proceed to study the stability property of the estimation error system (\ref{tilde-theta-2}), and consider the \emph{nonautonomous auxiliary system} of the form
\beeq{\label{auxi-tilde-theta-2}
\dot{\tilde\theta} = H(\theta,\tilde\theta,x_r(t))
}
whose stability property is formulated as below, with the proof given in Appendix \ref{app-1}.

\begin{lemma}\label{lemma-1}
Suppose that Assumptions \ref{ass-1}--\ref{ass-PE-2} holds. Then there exists a $k_{dz}^\star>0$ such that for all $k_{dz}\geq k_{dz}^\star$, the origin of system (\ref{auxi-tilde-theta-2}) is uniformly globally exponentially stable with  a $C^1$ function $V_{est}:\mathbb{R}_{+}\times\mathbb{R}^{q}\rightarrow\mathbb{R}_{+}$ and  constants $a_i>0$, $1\leq i\leq 4$ such that
\begin{subequations}
\begin{align}
&a_1|\tilde\theta|^2 \leq V_{est}(t,\tilde\theta) \leq a_2|\tilde\theta|^2\, \label{Vest-a}\\
&\dst\frac{\partial V_{est}}{\partial t}+\frac{\partial V_{est}}{\partial \tilde\theta}H(\theta,\tilde\theta,x_r) \leq -a_3 |\tilde\theta|^2\, \label{Vest-b}\\
&\left|\dst\frac{\partial V_{est}}{\partial \tilde\theta}\right|\leq a_4 |\tilde\theta|\,\label{Vest-c}.
\end{align}
\end{subequations}
\end{lemma}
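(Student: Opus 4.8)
The plan is to read \eqref{auxi-tilde-theta-2} as a persistency-of-excitation (PE) problem and to first use the dead-zone to make it global. Put $W(\tilde\theta)=\tfrac12|\tilde\theta|^2$ and differentiate along \eqref{auxi-tilde-theta-2}: by \eqref{def-H}, $\dot W=-\tilde\theta^\top\varsigma(x_r)\tilde\phi_s(\theta,\tilde\theta,x_r)-k_{dz}\tilde\theta^\top\mbox{dzv}(\tilde\theta+\theta)$. In the first term I would substitute $\tilde\theta=(\theta'-\theta)+(\tilde\theta+\theta-\theta')$ with $\theta':=\mbox{satv}(\tilde\theta+\theta)$ (recall $\mbox{satv}(\theta)=\theta$ for $\theta\in\Theta$): the part $-(\theta'-\theta)^\top\varsigma(x_r)[\phi(\theta',x_r)-\phi(\theta,x_r)]$ is bounded above by $-(\theta'-\theta)^\top M_1(x_r)(\theta'-\theta)\le0$ by the local monotonicity \eqref{ineq-ND}, while the remaining cross term involves the saturation deficit $\tilde\theta+\theta-\theta'$, which is nonzero only when some component of $\tilde\theta+\theta$ leaves $[-l_s,l_s]$ and is then of order $|\tilde\theta+\theta|_\infty-l_s$. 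For the dead-zone term the crucial fact is that $|\theta_i|\le l_\theta$ for $\theta\in\Theta$ (cf. \eqref{ell}), so whenever $\mbox{dz}((\tilde\theta+\theta)_i)\neq0$ the scalars $\tilde\theta_i$, $(\tilde\theta+\theta)_i$ and $\mbox{dz}((\tilde\theta+\theta)_i)$ have a common sign; hence $\tilde\theta^\top\mbox{dzv}(\tilde\theta+\theta)\ge0$ for all $\tilde\theta$, and it grows quadratically once any component of $\tilde\theta+\theta$ exits $[-l_s,l_s]$. Choosing $k_{dz}\ge k_{dz}^\star$ with $k_{dz}^\star$ fixed by the bounds on $|\varsigma(\cdot)|$ and on $\phi(\cdot)$ over $\mathbb{B}_{r_1}^n$, the dead-zone term absorbs the saturation-deficit cross term, yielding $\dot W\le0$ globally, with $\dot W\le-\tilde\theta^\top M_1(x_r)\tilde\theta$ on the forward-invariant ball $\mathbb{B}_{\rho_0}^q$, $\rho_0:=l_s-l_\theta$ (on which the saturation is inactive), and $\dot W\le-c<0$ as long as the saturation is active. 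Hence all solutions are bounded, $|\tilde\theta(\cdot)|$ is nonincreasing, and the saturation switches off in finite time.

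Next I would linearize. Since $\mbox{dz}$ is $C^1$ and \emph{flat with zero slope} on $[-l_\theta,l_\theta]$, the vector dead-zone $\mbox{dzv}$ contributes nothing to the Jacobian of $H$ at $\tilde\theta=0$, so the linearization of \eqref{auxi-tilde-theta-2} at the origin is the linear time-varying (LTV) system $\dot{\tilde\theta}=-A(t)\tilde\theta$ with $A(t):=\varsigma(x_r(t))\tfrac{\partial\phi}{\partial\theta}(\theta,x_r(t))$. By Assumption~\ref{ass-2} and Assumption~\ref{ass-1} (so that $\tfrac{\partial\phi}{\partial\theta}$ is continuous), $A(\cdot)$ is bounded, uniformly over $\theta\in\Theta$. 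Setting $\theta'=\theta+sv$ in \eqref{ineq-ND}, dividing by $s^2$ and letting $s\to0^+$ gives $v^\top A(t)v\ge v^\top M_1(x_r(t))v$ for every $v$, i.e. $A(t)+A(t)^\top\ge M_1(x_r(t))+M_1^\top(x_r(t))\ge0$, so that \eqref{ineq-PE} delivers the PE bound $\int_t^{t+\delta}\big(A(\tau)+A(\tau)^\top\big)\,d\tau\ge\mu I$. The standard PE lemma (a PE condition plus boundedness with $A+A^\top\ge0$ implies uniform exponential stability of $\dot x=-A(t)x$) then gives that $\dot{\tilde\theta}=-A(t)\tilde\theta$ is UGES, and an explicit $C^1$ Lyapunov function can be built, e.g. $V_{est}(t,\tilde\theta):=\int_t^{t+\delta}|\Phi(\tau,t)\tilde\theta|^2\,d\tau$ with $\Phi$ its transition matrix, satisfying \eqref{Vest-a}, \eqref{Vest-c} and the decrease \eqref{Vest-b} \emph{along the linearization}, with constants $a_i$ uniform over $\theta\in\Theta$.

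Finally I would pass back to \eqref{auxi-tilde-theta-2} and globalize. Writing $H(\theta,\tilde\theta,x_r)=-A(t)\tilde\theta+R(t,\tilde\theta)$, the $C^1$-differentiability of $\phi$ and the quadratic estimate $|\mbox{dzv}(\tilde\theta+\theta)|\le c_0|\tilde\theta|^2$ (valid because $|(\tilde\theta+\theta)_i|-l_\theta\le|\tilde\theta_i|$) give $|R(t,\tilde\theta)|\le|\tilde\theta|\chi(|\tilde\theta|)$ for some $\chi\in\mathcal{K}$; hence on a small enough ball $\mathbb{B}_{\rho_1}^q$ the perturbation is dominated and $\tfrac{\partial V_{est}}{\partial t}+\tfrac{\partial V_{est}}{\partial\tilde\theta}H(\theta,\tilde\theta,x_r)\le-\tfrac{a_3}{2}|\tilde\theta|^2$, i.e. local UGES of \eqref{auxi-tilde-theta-2} with \eqref{Vest-a}--\eqref{Vest-c} on $\mathbb{B}_{\rho_1}^q$. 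To cover $\mathbb{R}^q$, I would use the first step again: inside $\mathbb{B}_{\rho_0}^q$ the bound $\dot W\le-\tilde\theta^\top M_1(x_r)\tilde\theta$, \eqref{ineq-PE}, and the (trajectory-wise) boundedness of the effective system matrix yield a uniform contraction of $|\tilde\theta|^2$ over each window of length $\delta$, so every solution enters $\mathbb{B}_{\rho_1}^q$ in finite time; one then splices $V_{est}$ near the origin with a multiple of $|\tilde\theta|^2$ away from it, through a smooth partition of unity, to obtain a global $C^1$ $V_{est}$ obeying \eqref{Vest-a}--\eqref{Vest-c}. I expect this globalization to be the main difficulty: the same threshold $k_{dz}^\star$ that makes $\dot W\le0$ globally also inflates the effective Lipschitz constant of \eqref{auxi-tilde-theta-2} and hence degrades the PE contraction rate, so keeping all constants finite — and, above all, splicing the quadratic-form $V_{est}$ near the origin to the $|\tilde\theta|^2$ behaviour farther out so that the \emph{single} inequality \eqref{Vest-b} holds on all of $\mathbb{R}^q$ — requires care; a secondary delicate point is to verify the monotonicity hypothesis \eqref{ineq-ND} for the saturated arguments $\theta'=\mbox{satv}(\tilde\theta+\theta)$ actually occurring in $\tilde\phi_s$.
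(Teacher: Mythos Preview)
Your opening step and the paper's coincide: both compute $\tilde\theta^\top H(\theta,\tilde\theta,x_r)$ and exploit the sign of the dead-zone term. But the paper pushes this further than you do: by splitting on $|\tilde\theta+\theta|\lessgtr l_s$ and, in the large region, using that $|\varsigma(x_r)\tilde\phi_s|$ is globally bounded (thanks to the saturation) while $(\tilde\theta)^\top\mbox{dzv}(\tilde\theta+\theta)$ grows quadratically, it obtains the \emph{global} inequality
\[
\tilde\theta^\top H(\theta,\tilde\theta,x_r)\le -\tilde\theta^\top M_1(x_r(t))\tilde\theta
\]
for all $\tilde\theta\in\mathbb{R}^q$ once $k_{dz}\ge k_{dz}^\star$, not merely on the ball $\mathbb{B}^q_{\rho_0}$. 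This single global bound is what makes the rest of the argument clean.

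From there the two routes diverge. You linearize, build $V_{est}(t,\tilde\theta)=\int_t^{t+\delta}|\Phi_{\mathrm{lin}}(\tau,t)\tilde\theta|^2 d\tau$ from the \emph{LTV transition matrix}, obtain \eqref{Vest-a}--\eqref{Vest-c} only locally, and then try to splice with $c|\tilde\theta|^2$ via a partition of unity. The paper instead defines
\[
V_{est}(t,\tilde\theta)=\int_t^{t+\delta}|\Phi(\tau,t,\tilde\theta)|^2\,d\tau
\]
with $\Phi$ the flow of the \emph{nonlinear} system \eqref{auxi-tilde-theta-2} itself. The global $M_1$-bound plus \eqref{ineq-PE} give $|\Phi(t+\delta,t,\tilde\theta)|^2\le e^{-\mu}|\tilde\theta|^2$, hence $\dot V_{est}=|\Phi(t+\delta,t,\tilde\theta)|^2-|\tilde\theta|^2\le -(1-e^{-\mu})|\tilde\theta|^2$, which is \eqref{Vest-b} globally in one line. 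For \eqref{Vest-c} the paper uses that the saturation makes $\partial H/\partial\tilde\theta$ globally bounded, so the sensitivity $\partial\Phi/\partial\tilde\theta$ stays bounded on $[t,t+\delta]$; combined with $|\Phi(\tau,t,\tilde\theta)|\le C|\tilde\theta|$ this gives $|\partial V_{est}/\partial\tilde\theta|\le a_4|\tilde\theta|$ directly. No splicing is needed.

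Your globalization step, as you suspected, is where the real gap lies. The candidate you would glue to $V_{est}$ away from the origin, namely $c|\tilde\theta|^2$, satisfies only $\dfrac{d}{dt}(c|\tilde\theta|^2)\le -2c\,\tilde\theta^\top M_1(x_r(t))\tilde\theta\le 0$ pointwise, not a strict $-a_3|\tilde\theta|^2$ decrease; the uniform contraction you cite is a \emph{window-averaged} statement, not a pointwise Lyapunov inequality. Hence a partition-of-unity combination will not in general satisfy \eqref{Vest-b} through the transition annulus, and you would essentially be forced back to a flow-based construction to encode the PE decay --- which is exactly what the paper does from the outset, but with the nonlinear flow so that it is global from the start. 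The secondary worry you flag (applying \eqref{ineq-ND} to saturated arguments) is also handled more simply in the paper: it never sets $\theta'=\mbox{satv}(\tilde\theta+\theta)$; it sets $\theta'=\tilde\theta+\theta$, uses \eqref{ineq-ND} only when $|\theta'|\le l_s$ (where the saturation is inactive), and lets the dead-zone do all the work when $|\theta'|>l_s$.
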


Bearing in mind the significant property of (\ref{auxi-tilde-theta-2}) addressed in Lemma \ref{lemma-1}, we turn to consider the actual estimation error system (\ref{tilde-theta-2}).

\begin{lemma}\label{lemma-non-ex}
Suppose that Assumptions \ref{ass-1}--\ref{ass-PE-2} hold. Then system (\ref{tilde-theta-2}) is uniformly ISS with respect to state $\tilde\theta$ and input $e$, with the uniform ISS Lyapunov function $V_{est}(t,\tilde\theta)$ fulfilling (\ref{Vest-a}) and (\ref{Vest-c}), and
\beeq{\label{ineq-V-2-b}
V_{est} \geq \sigma_{\tilde\theta,e}(|e|) \,\Longrightarrow\, \dot V_{est}(t,\tilde\theta)\leq -\frac{a_3(\tau_{est}-1)}{\tau_{est}}|\tilde\theta|^2\,
}
for any $\tau_{est}>1$, and
\begin{eqnarray}
\sigma_{\tilde\theta,e}(s)&=&a_1(\tau_{est}a^\ast l_{\gamma})^2\kappa_2(s)^2\,\label{sigma-21}\\
a^\ast &=& \dst\frac{\sqrt{a_2}a_4}{\sqrt{a_1}a_3}\, \label{a-ast}\,.
\end{eqnarray}
\end{lemma}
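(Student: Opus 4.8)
The plan is to use the auxiliary system estimate from Lemma~\ref{lemma-1} as a candidate uniform ISS-Lyapunov function for the perturbed system~(\ref{tilde-theta-2}), and to absorb the perturbation term $\Delta$ into the negative-definite bound by a standard ``swallowing'' argument. Concretely, along trajectories of~(\ref{tilde-theta-2}) we have
\[
\dot V_{est} = \frac{\partial V_{est}}{\partial t} + \frac{\partial V_{est}}{\partial \tilde\theta}H(\theta,\tilde\theta,x_r) + \frac{\partial V_{est}}{\partial \tilde\theta}\Delta(\theta,\tilde\theta,x_r,e),
\]
so by~(\ref{Vest-b}), (\ref{Vest-c}) and the bound~(\ref{bound-Delta-2}),
\[
\dot V_{est} \leq -a_3|\tilde\theta|^2 + a_4|\tilde\theta|\,l_\gamma\kappa_2(|e|).
\]

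First I would split the $-a_3|\tilde\theta|^2$ term as $-\frac{a_3}{\tau_{est}}|\tilde\theta|^2 - \frac{a_3(\tau_{est}-1)}{\tau_{est}}|\tilde\theta|^2$ for the chosen $\tau_{est}>1$, and use the first piece to dominate the cross term: whenever
\[
\frac{a_3}{\tau_{est}}|\tilde\theta|^2 \geq a_4 l_\gamma \kappa_2(|e|)\,|\tilde\theta|,
\]
i.e.\ whenever $|\tilde\theta| \geq \tfrac{\tau_{est} a_4 l_\gamma}{a_3}\kappa_2(|e|) = \tau_{est} a^\ast l_\gamma \kappa_2(|e|)$ (using the definition~(\ref{a-ast}) of $a^\ast$ after noting $a_4/a_3$ and the $\sqrt{a_2/a_1}$ factor will reconcile with the $V_{est}$-level threshold below), the perturbation is swallowed and one obtains $\dot V_{est} \leq -\frac{a_3(\tau_{est}-1)}{\tau_{est}}|\tilde\theta|^2$. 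Then I would convert this state-space threshold on $|\tilde\theta|$ into a level-set threshold on $V_{est}$: by~(\ref{Vest-a}), $|\tilde\theta|^2 \geq \tfrac{1}{a_2}V_{est}$ is implied by a lower bound on $V_{est}$, and conversely $|\tilde\theta| \geq \rho$ is guaranteed once $V_{est} \geq a_2\rho^2$; more carefully, to make the implication~(\ref{ineq-V-2-b}) come out with the constant $\sigma_{\tilde\theta,e}(s) = a_1(\tau_{est}a^\ast l_\gamma)^2\kappa_2(s)^2$ exactly as stated, I would instead use the sharper route $V_{est}\geq \underline\alpha(|\tilde\theta|) = a_1|\tilde\theta|^2$ only as the \emph{lower} comparison and check that $V_{est}\geq a_1(\tau_{est}a^\ast l_\gamma)^2\kappa_2(|e|)^2$ forces $|\tilde\theta|\geq \tfrac{1}{?}\,\tau_{est}a^\ast l_\gamma\kappa_2(|e|)$ with the $a_2$-versus-$a_1$ discrepancy being precisely what the factor $a^\ast = \sqrt{a_2}a_4/(\sqrt{a_1}a_3)$ in~(\ref{a-ast}) is designed to compensate. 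Tracking this constant bookkeeping is the one place requiring care.

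The remaining items are immediate: $V_{est}$ already satisfies the sandwich bound~(\ref{Vest-a}), which gives~(\ref{P-V-i-a}) with $\underline\alpha(s)=a_1s^2$, $\bar\alpha(s)=a_2s^2$, and it satisfies the gradient bound~(\ref{Vest-c}); together with the implication~(\ref{ineq-V-2-b}) and the class-$\mathcal{K}$ property of $\kappa_2$ (hence of $\sigma_{\tilde\theta,e}$), this is exactly Definition~1 specialized to a single input $e$, so~(\ref{tilde-theta-2}) is uniformly ISS with respect to $\tilde\theta$ and input $e$. The main obstacle, as noted, is not conceptual but arithmetic: reconciling the two different quadratic constants $a_1,a_2$ appearing in~(\ref{Vest-a}) so that the swallowing threshold expressed in $V_{est}$ matches the stated gain $\sigma_{\tilde\theta,e}$ exactly; choosing the threshold as $a_1(\tau_{est}a^\ast l_\gamma)^2\kappa_2^2$ with $a^\ast$ carrying the $\sqrt{a_2/a_1}$ factor is what makes it work, and one should double-check the case $|\tilde\theta|=0$ (where the implication holds trivially since $\kappa_2(|e|)\geq 0$) and the continuity of all gains at the origin.
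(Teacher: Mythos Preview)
Your approach is exactly the paper's: differentiate $V_{est}$ along (\ref{tilde-theta-2}), bound the $\Delta$-term via (\ref{Vest-c}) and (\ref{bound-Delta-2}) to get $\dot V_{est}\le -a_3|\tilde\theta|^2 + a_4 l_\gamma|\tilde\theta|\kappa_2(|e|)$, split off $-\tfrac{a_3(\tau_{est}-1)}{\tau_{est}}|\tilde\theta|^2$, and convert the resulting threshold $|\tilde\theta|\ge \tfrac{\tau_{est}a_4 l_\gamma}{a_3}\kappa_2(|e|)$ into a $V_{est}$-level condition. The bookkeeping you flag as uncertain is resolved by using the \emph{upper} bound in (\ref{Vest-a}), namely $V_{est}\le a_2|\tilde\theta|^2$ (not the lower one), so $V_{est}\ge a_2\big(\tfrac{\tau_{est}a_4 l_\gamma}{a_3}\big)^2\kappa_2(|e|)^2$ forces the needed $|\tilde\theta|$-threshold; then the algebraic identity $a_2\big(\tfrac{a_4}{a_3}\big)^2 = a_1(a^\ast)^2$ from (\ref{a-ast}) rewrites this threshold as the stated $\sigma_{\tilde\theta,e}$.
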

\begin{proof}
It is clear from Lemma \ref{lemma-1} that (\ref{Vest-a}) and (\ref{Vest-c}) are satisfied. As for the proof of uniform ISS stability of system (\ref{tilde-theta-2}), we take the time derivative of $V_{est}(t,\tilde\theta)$ along (\ref{tilde-theta-2}), which using (\ref{Vest-b}) and (\ref{bound-Delta-2}), yields
\[\ba{rcl}
\dot V_{est} &=& \dst\frac{\partial V_{est}}{\partial t}+\frac{\partial V_{est}}{\partial \tilde\theta}H(\theta,\tilde\theta,x_r)  + \dst\frac{\partial V_{est}}{\partial \tilde\theta} \Delta(\theta,\tilde\theta,x_r,e)\,\\
&\leq& -a_3|\tilde\theta|^2 + a_4l_{\gamma} |\tilde\theta|\kappa_2(|e|)\,\\
&=& -\dfrac{a_3(\tau_{est}-1)}{\tau_{est}}|\tilde\theta|^2 - \dfrac{a_3}{\tau_{est}}|\tilde\theta|\left(|\tilde\theta|- \dfrac{\tau_{est}a_4l_{\gamma}}{a_3}\kappa_2(|e|)\right)
\ea\]
with any $\tau_{est}>1$.
Then, it immediately follows that
\[
|\tilde\theta| \geq \dfrac{\tau_{est}a_4l_{\gamma}}{a_3}\kappa_2(|e|) \,\Longrightarrow\, \dot V_{est}\leq -\frac{a_3(\tau_{est}-1)}{\tau_{est}}|\tilde\theta|^2\,,
\]
which leads to (\ref{ineq-V-2-b}) by recalling the right side of (\ref{Vest-a}).
This completes the proof. $\square$
\end{proof}

\begin{remark}
It is observed from (\ref{ineq-V-2-b}) and (\ref{sigma-21}) that the ISS gain function of the estimation error system (\ref{tilde-theta-2}) strongly relies on the function $\kappa_2$, which in fact can be shaped by the design freedom $\varsigma(\cdot)$ as seen from Lemma \ref{lemma-varphi}.
\end{remark}


\subsection{The control feedback design}

With the proposed estimator (\ref{hat-theta}), and bearing in mind the uniform ISS property of system (\ref{tilde-theta-2}), we now proceed to design the feedback law $u$ for (\ref{sys-e}).

A minimum requirement for successful tracking is that the controlled system (\ref{sys-e}) with a known $\theta$ is stabilizable. In this paper we make the following explicit stabilizability assumption.
\begin{assumption}\label{ass-ideal-sys}
There exists a function $\psi(x_r,\dot x_r,\theta,e)$ such that the zero equilibrium point of system
\beeq{\label{ideal-sys-e}
\dot e = F(x_r,\dot x_r,\theta,e)\,
}
with
\[\ba{l}
F(x_r,\dot x_r,\theta,e)= f_1(e+x_r) + \phi(\theta,e+x_r) \qquad  \qquad\,\\ \qquad \qquad \qquad\qquad + g_1(e+x_r)\psi(e,x_r,\dot x_r,\theta) - \dot x_r(t)\,,
\ea\]
is globally asymptotically stable, uniformly in $t\in\mathbb{R}_+$ and $\theta\in\mathbb{R}^q$. More specifically, there exist a strict Lyapunov function $V_{err}(t,e)$,  and class $\mathcal{K}_{\infty}$ functions $\alpha_i$, $1\leq i\leq4$ such that for all $\theta\in\mathbb{R}^q$
\begin{subequations}
\begin{align}
&\alpha_1(|e|) \leq V_{err}(t,e) \leq \alpha_2(|e|)\, \label{ineq-V-a}\\
&\dst\frac{\partial V_{err}}{\partial t}+\dst\frac{\partial V_{err}}{\partial e}F(e,x_r,\dot x_r,\theta) \leq -\alpha_3(|e|)\,\label{ineq-V-b}\\
&\dst\left|\frac{\partial V_{err}}{\partial e}\right| \leq \alpha_4(|e|)\, \label{ineq-V-c}
\end{align}
\end{subequations}
are satisfied. $\square$
\end{assumption}
\vspace{0.8em}
\begin{remark}
To satisfy Assumption \ref{ass-ideal-sys}, system (\ref{sys-e}) generally needs to satisfy a matching condition so as to guarantee the existence of such a Lyapunov function $V_{err}$, independent of $\theta$. In spite of this, the proposed method can also be extended to some unmatched cases by employing techniques, such as backstepping (see the subsequent Section \ref{sec-SEAs} for an example with an explicit construction). $\square$
\end{remark}

With Assumption \ref{ass-ideal-sys}, we design the control law $u$ as
\beeq{\label{u-1}
u=\psi(x_r,\dot x_r,\mbox{satv}(\hat\theta + \beta(e,x_r)),e)\,.
}
Substituting (\ref{u-1}) into (\ref{sys-e}) yields
\beeq{\label{err-sys}
\dot e = F(x_r,\dot x_r,\mbox{satv}(\hat\theta + \beta(e,x_r)),e)  - \tilde\phi_s(\theta,\tilde\theta,e+x_r),
}
along which the time derivative of $V_{err}$ is computed by
\begin{eqnarray}
\dst\dot V_{err}
&\leq& -\alpha_3(|e|) + \left|\dst\frac{\partial V_{err}}{\partial e}\right| \,|\tilde\phi_s(\theta,\tilde\theta,e+x_r)|\, \nonumber\\
 &\leq& -\alpha_3(|e|) + \alpha_4(|e|)\kappa_1(|e|)\gamma_s(|\tilde\theta|)\, \label{V-err-1}
\end{eqnarray}
where (\ref{ineq-V-b}) is used to obtain the first inequality, and (\ref{ineq-V-c}) and (\ref{Ineq-UP}) are used to obtain the last inequality. Note that $\kappa_1\in\mathcal{SN}$ and $\gamma_s\in\mathcal{K}\backslash \mathcal{K}_\infty$.

The function $\alpha_3$ in (\ref{ineq-V-b}) can be shaped by appropriately designing the ``ideal" control $u=\psi(\cdot)$. Inequality (\ref{V-err-1}) suggests appropriately designing $\psi(\cdot)$ to obtain $\alpha_3(\cdot)$ so that  system (\ref{err-sys}) is uniformly ISS with respect to state $e$ and input $\tilde\theta$. The resulting closed-loop system (\ref{tilde-theta-2}), (\ref{err-sys}) can then be viewed as an interconnection of two uniform ISS subsystems, for which the standard ISS small-gain theorem \cite{Jiang&Teel(1994)} or Theorem \ref{theo-csgt} with $N=2$ can be employed to verify the closed-loop asymptotic stability. In view of these intuitions, the following theorem is concluded, with the proof given in Appendix \ref{app-sec-proof-Th2}.

\begin{theorem}\label{theo-general}
Consider system (\ref{sys-x}) with parameter estimator (\ref{hat-theta}) and feedback controller (\ref{u-1}). Suppose that Assumptions \ref{ass-1}--\ref{ass-ideal-sys} hold, and there exists a constant $\tau_{err}>1$ such that
\beeq{\label{alpha-3}
\alpha_3(s)\geq \tau_{err}\gamma_s\circ a^\ast l_{\gamma}\kappa_2\circ\alpha_1^{-1}\circ\alpha_2(s)\cdot \alpha_4(s)\cdot\kappa_1(s)\,
}
for all $s\in\mathbb{R}_+$.
Then the zero equilibrium point of the resulting closed-loop error system (\ref{tilde-theta-2}), (\ref{err-sys}) is uniformly globally asymptotically stable.  $\square$
\end{theorem}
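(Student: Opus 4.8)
The plan is to verify the hypotheses of Theorem~\ref{theo-csgt} (the cyclic small-gain theorem) with $N=2$, where the two subsystems are the estimation error system (\ref{tilde-theta-2}) and the tracking error system (\ref{err-sys}). From Lemma~\ref{lemma-non-ex} we already have that the $\tilde\theta$-subsystem is uniformly ISS with ISS-Lyapunov function $V_{est}$ and the explicit gain from $|e|$ given through $\sigma_{\tilde\theta,e}$ in (\ref{sigma-21}). So the first genuine step is to establish that the $e$-subsystem (\ref{err-sys}) is uniformly ISS with respect to state $e$ and input $\tilde\theta$, using $V_{err}$ from Assumption~\ref{ass-ideal-sys} as a candidate. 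Starting from the estimate (\ref{V-err-1}), namely $\dot V_{err}\leq -\alpha_3(|e|) + \alpha_4(|e|)\kappa_1(|e|)\gamma_s(|\tilde\theta|)$, I would split off a fraction $\tau_{err}$ of $\alpha_3$ exactly as in the proof of Lemma~\ref{lemma-non-ex}: choosing the implication threshold so that $\alpha_4(|e|)\kappa_1(|e|)\gamma_s(|\tilde\theta|)\leq \tfrac{1}{\tau_{err}}\alpha_3(|e|)$ whenever $|e|$ is large enough relative to $|\tilde\theta|$. This yields an implication of the form $V_{err}\geq \sigma_{e,\tilde\theta}(|\tilde\theta|)\Rightarrow \dot V_{err}\leq -\tfrac{\tau_{err}-1}{\tau_{err}}\alpha_3(|e|)$ for a suitable $\sigma_{e,\tilde\theta}\in\mathcal{K}$, establishing uniform ISS of (\ref{err-sys}); care is needed here because $\alpha_3$ need not be $\mathcal{K}_\infty$-invertible, so I would phrase the gain via the $\rho^\ominus$ construction introduced in the Notation paragraph, or equivalently bound $\gamma_s(|\tilde\theta|)$ against $\alpha_3$ directly using that $\gamma_s\in\mathcal{K}\setminus\mathcal{K}_\infty$ is bounded by $l_\gamma$.

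Next I would pass from the state-based gains $\sigma_{\tilde\theta,e}$ and $\sigma_{e,\tilde\theta}$ to the Lyapunov-based gains $\gamma_{12},\gamma_{21}$ via the reparameterization indicated after Definition~1, i.e. $\gamma_{\tilde\theta e} = \sigma_{\tilde\theta,e}\circ\alpha_1^{-1}$ and $\gamma_{e\tilde\theta} = \sigma_{e,\tilde\theta}\circ\underline\alpha_{est}^{-1}$ where $\underline\alpha_{est}(s)=a_1 s^2$ from (\ref{Vest-a}). The cyclic small-gain condition (\ref{csmc}) for $N=2$ reduces to the single composition $\gamma_{e\tilde\theta}\circ\gamma_{\tilde\theta e}<\mathrm{Id}$. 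Unwinding this composition and using (\ref{sigma-21})--(\ref{a-ast}), I would show that it is implied precisely by the hypothesis (\ref{alpha-3}): the right-hand side of (\ref{alpha-3}) is (up to the factor $\tau_{err}>1$) exactly the threshold that makes the loop gain strictly below identity, with the factor $\alpha_1^{-1}\circ\alpha_2$ accounting for the conversion of $V_{err}$ back to $|e|$ and the factor $a^\ast l_\gamma$ coming from (\ref{sigma-21}). Having verified both subsystem ISS properties and the small-gain inequality, Theorem~\ref{theo-csgt} delivers that the interconnection is uniformly ISS with respect to $(e,\tilde\theta)$ and the exogenous input $u$; since $u$ here is the feedback (\ref{u-1}) and there is no external perturbation in this section, ``ISS with respect to no input'' is uniform global asymptotic stability of the origin of (\ref{tilde-theta-2}), (\ref{err-sys}).

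The main obstacle I anticipate is bookkeeping at the interface between state-based and Lyapunov-based gain formulations, compounded by the fact that $\alpha_3$ and $\gamma_s$ are not $\mathcal{K}_\infty$: one must be scrupulous about which inequalities require $\mathcal{K}_\infty$-invertibility and which can tolerate merely $\mathcal{K}$ (using $\rho\circ\rho^\ominus\leq\mathrm{Id}$), and about the direction of the composition in the small-gain inequality. A secondary technical point is making (\ref{alpha-3}) actually realizable: since $\alpha_3$ is shaped by the designer's choice of $\psi$ in Assumption~\ref{ass-ideal-sys}, one should note (perhaps in a remark rather than in the proof proper) that the right-hand side of (\ref{alpha-3}) is a fixed continuous nondecreasing function vanishing at zero, so $\alpha_3$ can always be chosen to dominate it by adding, e.g., a nonlinear damping term to $\psi$. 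With these caveats handled, the argument is a direct application of Theorem~\ref{theo-csgt} with $N=2$.
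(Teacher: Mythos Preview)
Your proposal is correct and follows essentially the same route as the paper: establish uniform ISS of the $e$-subsystem from (\ref{V-err-1}) using $V_{err}$, convert both gains to Lyapunov form, and close with Theorem~\ref{theo-csgt} for $N=2$. The paper's execution differs only in bookkeeping: it introduces intermediate constants $\tau_{err}>\tau_{err}'>\tau_{est}>1$ and exploits $\gamma_s(s)\le s$ in the form $\tau_{err}'\gamma_s(s/\tau_{err}')\le s$ to obtain the explicit gain $\gamma_{e,\tilde\theta}(s)=\alpha_1\circ\sigma_{\tilde\theta,e}^{\ominus}\bigl(\tfrac{\tau_{est}}{\tau_{err}'}s\bigr)$, which makes the small-gain check $\gamma_{e,\tilde\theta}\circ\gamma_{\tilde\theta,e}<\mathrm{Id}$ immediate---precisely the $\rho^{\ominus}$ manoeuvre you anticipated.
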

\begin{remark}
It is observed that the proposed design paradigm consists of two design freedoms: $\varsigma(\cdot)$ and $u$, that can be used to shape functions $\kappa_2$ and $\alpha_3$, respectively, such that (\ref{alpha-3}) is satisfied. $\square$
\end{remark}

In Theorem \ref{theo-general}, following the ISS small-gain theorem \cite{LiuHillJiang(Aut2011)}, we present a sufficient condition of uniformly globally asymptotically stabilizing the origin of the interconnected system (\ref{tilde-theta-2}), (\ref{err-sys}). However, as shown in \cite{LiuHillJiang(Aut2011)} it is unclear how to construct the corresponding \emph{smooth} Lyapunov function, which can play a significant role in analyzing the system performance and dealing with other problems such as adaptive output regulation. In the following, inspired by the idea of \cite{Ito(CDC2002)}, an explicit construction method of the smooth Lyapunov function for the closed loop is proposed, with the proof given in Appendix \ref{app-sec-proof-Coro1}.
\begin{corollary}\label{Coro1}
Consider system (\ref{sys-x}) with parameter estimator (\ref{hat-theta}) and feedback controller (\ref{u-1}). Suppose that Assumptions \ref{ass-1}--\ref{ass-ideal-sys} hold, and (\ref{alpha-3}) is satisfied with $\tau_{err}>4$. Then the resulting closed-loop error system (\ref{tilde-theta-2}), (\ref{err-sys}) permits a smooth strict Lyapunov function having the sum-type form
\beeq{\label{V-cl-1}
V_{\rm cl}(t,e,\tilde\theta) = \int_{0}^{V_{err}}\lambda_{err}(s)ds + \int_{0}^{V_{est}}\lambda_{est}(s)ds\,
}
with $\lambda_{err},\lambda_{est}\in\mathcal{K}$. $\square$
\end{corollary}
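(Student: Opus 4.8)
The plan is to verify that the candidate function $V_{\rm cl}$ in \eqref{V-cl-1} is a strict Lyapunov function for the interconnected system \eqref{tilde-theta-2}, \eqref{err-sys} by choosing the weighting functions $\lambda_{err},\lambda_{est}\in\mathcal{K}$ appropriately, following the construction of \cite{Ito(CDC2002)}. First I would re-express the two ISS estimates in a compatible ``$V$-to-$V$'' form: from Lemma \ref{lemma-non-ex} the estimation subsystem satisfies $V_{est}\geq\sigma_{\tilde\theta,e}(|e|)\Rightarrow\dot V_{est}\leq -\alpha_{est}(|\tilde\theta|)$ for some $\alpha_{est}\in\mathcal{K}_\infty$ (with $\alpha_{est}(s)=\tfrac{a_3(\tau_{est}-1)}{\tau_{est}}s^2$), and combining with \eqref{Vest-a} this becomes $V_{est}\geq\gamma_{est}(V_{err})\Rightarrow\dot V_{est}\leq -\tilde\alpha_{est}(V_{est})$, where $\gamma_{est}=\sigma_{\tilde\theta,e}\circ\alpha_1^{-1}$ (using $|e|\leq\alpha_1^{-1}(V_{err})$) and $\tilde\alpha_{est}=\alpha_{est}\circ(\text{something})$. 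Symmetrically, from \eqref{V-err-1} and Assumption \ref{ass-ideal-sys}, choosing $\psi$ so that \eqref{alpha-3} holds, the tracking subsystem gives $V_{err}\geq\gamma_{err}(V_{est})\Rightarrow\dot V_{err}\leq -\tilde\alpha_{err}(V_{err})$ with a gain $\gamma_{err}$ built from $\gamma_s\circ a^\ast l_\gamma\kappa_2$ and the $\alpha_i$'s. Condition \eqref{alpha-3} with $\tau_{err}>4$ (rather than merely $>1$) is precisely what guarantees the strengthened small-gain inequality $\gamma_{err}\circ\gamma_{est}<\mathrm{Id}$ with enough margin that the Ito-type construction goes through; I would make the margin explicit, e.g. $\gamma_{err}\circ\gamma_{est}(s)\leq\tfrac{1}{4}s$ or similar, absorbing the factor $4$ there.

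The next step is the actual choice of $\lambda_{err}$ and $\lambda_{est}$. Following \cite{Ito(CDC2002)}, one picks $\lambda_{est}$ to be (a $\mathcal{K}$-majorant of) the derivative of a $\mathcal{K}_\infty$ function that dominates $\gamma_{err}$ on the relevant scaling, and $\lambda_{err}$ correspondingly on the other side, so that on the ``bad set'' where one of the two ISS implications is not active, the decrease of the other Lyapunov function, weighted by its $\lambda$, strictly dominates the (sign-indefinite) cross term coming from the coupling. Concretely, I would split $\mathbb{R}_+^2$ (the $(V_{err},V_{est})$ plane) into the region $\{V_{est}\geq\gamma_{est}(V_{err})\}$ and its complement $\{V_{est}\leq\gamma_{est}(V_{err})\}\subseteq\{V_{err}\geq\gamma_{est}^{-1}(V_{est})\}$ and use, respectively, the $V_{est}$-decrease (dominating via $\lambda_{est}$) or the $V_{err}$-decrease (dominating via $\lambda_{err}$), in each case controlling the other term; the small-gain margin from $\tau_{err}>4$ is what makes this bookkeeping close. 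Positive definiteness and radial unboundedness of $V_{\rm cl}$ follow immediately from $\lambda_{err},\lambda_{est}\in\mathcal{K}$ together with \eqref{ineq-V-a} and \eqref{Vest-a}, and smoothness follows from smoothness of $V_{err}$, $V_{est}$ and the integrands; uniformity in $t$ is inherited from the uniformity of the two component ISS-Lyapunov functions.

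The main obstacle I anticipate is the explicit selection of the pair $(\lambda_{err},\lambda_{est})$ and the verification of the strict decrease along the boundary $\{V_{est}=\gamma_{est}(V_{err})\}$: there one must show that the two one-sided estimates patch together so that $\dot V_{\rm cl}\leq -\alpha(|(e,\tilde\theta)|)$ for some $\alpha\in\mathcal{PD}$ (hence, after the usual comparison, for some $\mathcal{K}_\infty$ function), rather than merely $\dot V_{\rm cl}\leq 0$. This requires the cross terms $\lambda_{err}(V_{err})\cdot(\text{coupling in }\dot V_{err})$ and $\lambda_{est}(V_{est})\cdot(\text{coupling in }\dot V_{est})$ to be absorbed with strict margin, and it is here that the strengthening of the small-gain condition from $\tau_{err}>1$ to $\tau_{err}>4$ is used in an essential way — it provides the room needed so that a single pair of weighting functions works simultaneously on both regions. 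The remaining steps (deriving the two ``$V$-to-$V$'' implications, checking the bounds \eqref{ineq-V-a}–\eqref{ineq-V-c} and \eqref{Vest-a}–\eqref{Vest-c} feed through, and assembling the final dissipation inequality) are routine once this patching estimate is in hand, so I would relegate them to short computations and concentrate the write-up on the choice of $\lambda_{err},\lambda_{est}$ and the boundary estimate.
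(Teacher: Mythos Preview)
Your overall strategy---follow Ito's sum-type construction, choose weights $\lambda_{err},\lambda_{est}$, and verify strict decrease of $V_{\rm cl}$---is the right one and matches the paper's intent. However, there is a genuine gap in how you set things up that would prevent the argument from closing.

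The paper does \emph{not} work from the implication-form ISS estimates $V_{est}\geq\gamma_{est}(V_{err})\Rightarrow\dot V_{est}\leq-\tilde\alpha_{est}(V_{est})$ as you propose. Instead it starts from the \emph{dissipation-form} inequalities that are already available from \eqref{V-err-1} and the proof of Lemma~\ref{lemma-non-ex}, namely
\[
\dot V_{err}\leq -\bigl(\rho_{err}(|e|)-\sigma_{err}(|\tilde\theta|)\bigr)\zeta_{err}(|e|),\qquad
\dot V_{est}\leq -\bigl(\rho_{est}(|\tilde\theta|)-\sigma_{est}(|e|)\bigr)\zeta_{est}(|\tilde\theta|),
\]
with $\rho_{err},\sigma_{err},\zeta_{err},\rho_{est},\sigma_{est},\zeta_{est}$ given explicitly. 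This matters because your region-splitting plan (analyze $\{V_{est}\geq\gamma_{est}(V_{err})\}$ and its complement separately, then ``patch at the boundary'') gives you no control over $\dot V_{est}$ on the complement: the implication form is simply silent there. To bound the cross term $\lambda_{est}(V_{est})\dot V_{est}$ on that set you need a global upper bound on $\dot V_{est}$, and that is exactly what the dissipation form supplies. Without it, your ``the decrease of the other Lyapunov function dominates the cross term'' step has nothing to dominate against. The region-splitting picture you describe is closer to the max-type (nonsmooth) Lyapunov construction; for a \emph{smooth sum-type} function one must keep the global dissipation inequalities.

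Two further points where the paper is more concrete than your plan. First, the weighting functions are written down explicitly as
\[
\lambda_{err}(s)=\sigma_{est}\circ\alpha_1^{-1}(s)\cdot\zeta_{est}\circ\rho_{est}^{-1}\circ\tau_2\sigma_{est}\circ\alpha_1^{-1}(s),\qquad
\lambda_{est}(s)=\sigma_{err}\circ\underline{\alpha}_{est}^{-1}(s)\cdot\zeta_{err}\circ\rho_{err}^{\ominus}\circ\tau_1\sigma_{err}\circ\underline{\alpha}_{est}^{-1}(s),
\]
rather than left as ``a $\mathcal{K}$-majorant of the derivative of a $\mathcal{K}_\infty$ function that dominates $\gamma_{err}$.'' Second, the role of $\tau_{err}>4$ is not a vague ``margin in the small-gain composition $\gamma_{err}\circ\gamma_{est}$'' but specifically the factorization $\tau_{err}\geq\tau_1\tau_2$ with $\tau_1,\tau_2>2$: after bounding each cross term via the Sontag--Teel scaling argument (a case split on $\rho_{err}(|e|)\gtrless\tau_1\sigma_{err}(|\tilde\theta|)$, and symmetrically), one is left with coefficients $\tfrac{\tau_i-2}{\tau_i}$ multiplying the negative-definite pieces, and these must be strictly positive. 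So the ``$>4$'' is consumed by two separate ``$>2$'' requirements, one per subsystem, not by a single quarter-contraction of the composed gain.
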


\section{Robustness Analysis}
\label{sec-rob}

In this section, we demonstrate how to robustify the proposed adaptive controller by redesigning the feedback control law, such that  the resulting closed-loop system subject to input perturbation $d(t)$ is ISS. More explicitly, we consider perturbed nonlinear systems of the form
\beeq{\label{d-sys-x}
\dot x = f_1(x) + \phi(\theta,x) + g_1(x)(u + d(t))\,.
}

Instead of (\ref{u-1}), we design the robust  control law as
\beeq{\label{r-u}
u=\psi(e,x_r,\dot x_r,\mbox{satv}(\hat\theta + \beta(e,x_r)))-\eta(x_r,e)\,
}
where $\eta(\cdot)$ is the nonlinear damping function.

This, together with the estimator (\ref{hat-theta}), yields the resulting closed-loop error system with input perturbation $d(t)$ as
\beeq{\label{rclsys}\ba{rcl}
\dot e &=& F(e,x_r,\dot x_r,\mbox{satv}(\hat\theta + \beta(e,x_r))) -g(e+x_r)\eta(x_r,e) \,\\&& - \tilde\phi_s(\theta,\tilde\theta,e+x_r) + g(e+x_r)d(t)\\
\dot{\tilde\theta} &=& H(\theta,\tilde\theta,x_r) + \Delta(\theta,\tilde\theta,x_r,e) + \varsigma(e+x_r)g(e+x_r)d(t)
\ea}
Due to the presence of perturbation $d(t)$, two extra terms $g(e+x_r)d(t)$ and $\varsigma(e+x_r)g(e+x_r)d(t)$ appear in the $e$-subsystem and $\tilde\theta$-subsystem, respectively. With this in mind, we observe that there exist functions $\kappa_3,\kappa_4\in\mathcal{K}$ such that for all $e\in\mathbb{R}^n$ and $x_r\in\mathbb{B}_{r_1}^n$,
\beeq{\label{kappa-34}\ba{l}
|g(e+x_r)| \leq \kappa_3(|e|)+\kappa_3^\ast\,,\,\,
|\varsigma(e+x_r)| \leq \kappa_4(|e|) + \kappa_4^\ast
\ea}
holds with
$
\kappa_3^\ast=\sup\limits_{x_r\in\mathbb{B}_{r_1}^n}\|g(x_r)\|$, $ \kappa_4^\ast=\sup\limits_{x_r\in\mathbb{B}_{r_1}^n}\|\varsigma(x_r)\|$.

Let
\beeq{\label{nu}
\nu(s) = [\kappa_3(s)\kappa_4(s)+\kappa_4^\ast\kappa_3(s)+\kappa_3^\ast\kappa_4(s)]^2\,,
}
and
\beeq{\label{bar-kappa-2}
\bar\kappa_2(s) = l_{\gamma}\kappa_2(s)+\nu(s)\,.
}
\begin{theorem}\label{theo-rgeneral}
Suppose that Assumptions \ref{ass-1}--\ref{ass-ideal-sys} hold, and there exists a constant $\tau_{err}>1$ such that
\beeq{\label{r-alpha-3}
\alpha_3(s)\geq \tau_{err}\gamma_s\circ a^\ast \bar\kappa_2\circ\alpha_1^{-1}\circ\alpha_2(s)\cdot \alpha_4(s)\cdot\kappa_1(s)\,
}
holds for all $s\in\mathbb{R}_+$.
Choose the nonlinear damping term  as
\beeq{\label{r-eta}\ba{l}
\eta(x_r,e) = k_d\left[\dst\frac{\partial V_{err}}{\partial e}g(e+x_r)\right]^{\top}\,
\ea}
with $k_d>0$.
Then system (\ref{rclsys}) is uniformly ISS with respect to the states $(e,\tilde\theta)$ and input $d(t)$. $\square$
\end{theorem}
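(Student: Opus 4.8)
The plan is to run the same cyclic/ISS small-gain argument as in Theorem~\ref{theo-general}, but now carrying the perturbation $d(t)$ as an additional external input through both subsystems of~(\ref{rclsys}). First I would treat the $\tilde\theta$-subsystem. Along $V_{est}$, using~(\ref{Vest-b}), the bound~(\ref{bound-Delta-2}) on $\Delta$, the bound~(\ref{Vest-c}) on $|\partial V_{est}/\partial\tilde\theta|$, and the growth bounds~(\ref{kappa-34}) on $|\varsigma(e+x_r)|$ and $|g(e+x_r)|$, one gets
\[
\dot V_{est}\le -a_3|\tilde\theta|^2 + a_4|\tilde\theta|\,l_\gamma\kappa_2(|e|) + a_4|\tilde\theta|\,\big(\kappa_4(|e|)+\kappa_4^\ast\big)\big(\kappa_3(|e|)+\kappa_3^\ast\big)|d|.
\]
Expanding the product $(\kappa_4+\kappa_4^\ast)(\kappa_3+\kappa_3^\ast)$ and splitting it as $\kappa_3^\ast\kappa_4^\ast + [\kappa_3\kappa_4+\kappa_4^\ast\kappa_3+\kappa_3^\ast\kappa_4]$, the bracketed term is $\sqrt{\nu(|e|)}$ by~(\ref{nu}); absorbing the constant part into the $d$-channel, one obtains a standard ISS-type implication
\[
V_{est}\ge\max\{\tilde\sigma_{\tilde\theta,e}(|e|),\ \sigma_{\tilde\theta,d}(|d|)\}\ \Longrightarrow\ \dot V_{est}\le -\bar a_3|\tilde\theta|^2,
\]
where the $e$-gain is driven by $\bar\kappa_2$ from~(\ref{bar-kappa-2}) (the $l_\gamma\kappa_2$ part from $\Delta$ and the $\nu$ part from the perturbation combine), and $\sigma_{\tilde\theta,d}\in\mathcal{K}$. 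Concretely $\tilde\sigma_{\tilde\theta,e}(s)=a_1(\tau_{est}a^\ast\bar\kappa_2(s))^2$ after using~(\ref{Vest-a}), exactly mirroring~(\ref{sigma-21})--(\ref{a-ast}) with $l_\gamma\kappa_2$ replaced by $\bar\kappa_2$.

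Next I would treat the $e$-subsystem. Along $V_{err}$, using~(\ref{ineq-V-b}), (\ref{ineq-V-c}), the choice~(\ref{r-eta}) of the damping term, and~(\ref{Ineq-UP}),
\[
\dot V_{err}\le -\alpha_3(|e|) - k_d\Big|\tfrac{\partial V_{err}}{\partial e}g(e+x_r)\Big|^2 + \alpha_4(|e|)\kappa_1(|e|)\gamma_s(|\tilde\theta|) + \Big|\tfrac{\partial V_{err}}{\partial e}g(e+x_r)\Big|\,|d|.
\]
The key point is that the nonlinear damping square absorbs the $d$-cross-term by completion of squares: $-k_d\xi^2+\xi|d|\le \tfrac{1}{4k_d}|d|^2$ where $\xi=|\tfrac{\partial V_{err}}{\partial e}g(e+x_r)|$. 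Hence
\[
\dot V_{err}\le -\alpha_3(|e|) + \alpha_4(|e|)\kappa_1(|e|)\gamma_s(|\tilde\theta|) + \tfrac{1}{4k_d}|d|^2,
\]
and then, as in the derivation preceding Theorem~\ref{theo-general} (split $\alpha_3$ into a $\tau_{err}$-fraction plus remainder, use $\gamma_s\le\mathrm{Id}$ and~(\ref{L-gamma})), one gets
\[
V_{err}\ge\max\{\tilde\sigma_{e,\tilde\theta}(V_{est}),\ \sigma_{e,d}(|d|)\}\ \Longrightarrow\ \dot V_{err}\le -\tfrac{\tau_{err}-1}{\tau_{err}}\alpha_3(|e|),
\]
with $\sigma_{e,d}\in\mathcal{K}$ and the $\tilde\theta$-gain $\tilde\sigma_{e,\tilde\theta}$ determined by the left side of~(\ref{r-alpha-3}) composed with $\alpha_1^{-1}\circ\alpha_2$ — the crucial observation being that~(\ref{r-alpha-3}) is precisely the condition that makes the product of the two interconnection gains $\gamma_{e\tilde\theta}\circ\gamma_{\tilde\theta e}<\mathrm{Id}$, since it is~(\ref{alpha-3}) with $\kappa_2$ upgraded to $\bar\kappa_2$ to account for the perturbation channel in the estimator.

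Finally I would invoke Theorem~\ref{theo-csgt} with $N=2$ (equivalently the ISS small-gain theorem of~\cite{Jiang&Teel(1994)}) for the system~(\ref{rclsys}) with state $(e,\tilde\theta)$ and external input $d$: the two subsystems are each uniformly ISS with respect to the other state and to $d$, and the single cycle condition $\gamma_{e\tilde\theta}\circ\gamma_{\tilde\theta e}<\mathrm{Id}$ holds by~(\ref{r-alpha-3}) together with $\gamma_s\in\mathcal{K}\backslash\mathcal{K}_\infty$ (so that, exactly as in the proof of Theorem~\ref{theo-general}, the composition is bounded by the identity). The small-gain theorem then yields that $(e,\tilde\theta)$ is uniformly ISS with respect to $d$. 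The main obstacle — and the place requiring care rather than difficulty — is bookkeeping the perturbation so that it enters only as an additive external input of class $\mathcal K$ in each implication and does \emph{not} inflate the mutual interconnection gains; this is what forces the specific form~(\ref{nu})--(\ref{bar-kappa-2}) of $\bar\kappa_2$ and the quadratic damping~(\ref{r-eta}), and it is why~(\ref{r-alpha-3}) rather than~(\ref{alpha-3}) is the right hypothesis. A secondary technical point is to ensure $\sigma_{\tilde\theta,d}$ and $\sigma_{e,d}$ are genuinely class $\mathcal K$ (not merely $\mathcal{K}$ on a bounded domain), which follows since $\kappa_3,\kappa_4,\alpha_4$ and the saturation of $\hat\theta+\beta$ keep all $x_r$-dependent coefficients uniformly bounded for $x_r\in\mathbb B_{r_1}^n$.
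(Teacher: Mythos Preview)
Your proposal is correct and follows essentially the same route as the paper's proof: bound $\dot V_{est}$ using~(\ref{Vest-b})--(\ref{Vest-c}) and~(\ref{kappa-34}), split $\sqrt{\nu(|e|)}\,|d|$ so that the $\nu$-part joins $l_\gamma\kappa_2$ to form $\bar\kappa_2$ in the $e$-gain while the remainder goes into $\sigma_{\tilde\theta,d}$; bound $\dot V_{err}$ using the completion-of-squares $-k_d\xi^2+\xi|d|\le\tfrac{1}{4k_d}|d|^2$ with $\xi=|\tfrac{\partial V_{err}}{\partial e}g|$; and then invoke the two-subsystem small-gain theorem with~(\ref{r-alpha-3}) playing the role of~(\ref{alpha-3}). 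The paper additionally introduces the auxiliary constants $\tau_{err}>\tau_{err}'>\tau_{est}>1$ to make the cycle strictly contractive (your decay rate $-\tfrac{\tau_{err}-1}{\tau_{err}}\alpha_3$ is slightly coarser but leads to the same conclusion).
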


The proof of Theorem \ref{theo-rgeneral} is given in Appendix \ref{app-sec-proof-Theo-rg}.
Before the close of this section, it is worth noting that if (\ref{r-eta}) holds with $\tau_{err}>4$, then we can construct a smooth ISS Lyapunov function having the sum-type form as in (\ref{V-cl-1}). The explicit construction of such a smooth ISS Lyapunov function follows the proof of Corollary \ref{Coro1} and is thus omitted.

\begin{remark}
From (\ref{rclsys}), it can be seen that the perturbation $d(t)$ appears in both the $e$ and $\tilde\theta$ subsystems. To guarantee robust stability (i.e., ISS in this paper), it is natural to redesign the feedback control by introducing the nonlinear damping term $\eta(\cdot)$ in (\ref{r-eta}), which  can however only dominate the effect caused in the $e$ subsystem. As for the effect brought to the $\tilde\theta$-subsystem by $d(t)$, it can be seen that the corresponding ISS gain function is modified. This, as a consequence, requires a more restrictive condition (\ref{r-alpha-3}) by replacing $l_\gamma\kappa_2$ in (\ref{alpha-3}) by $\bar\kappa_2=l_{\gamma}\kappa_2+\nu$ in order to fulfill the ISS small-gain theorem. $\square$
\end{remark}

\begin{remark}
Despite this paper only considering the ISS robustness in the presence of the input perturbation, its extension to other kinds of perturbations such as parameter perturbation can be obtained by appropriately adapting the above arguments. $\square$
\end{remark}

\section{Removing the Need to Solve PDE (\ref{PDE})}
\label{sec-PDE}

In this section, we  present an approach to remove the need of solving the PDE (\ref{PDE}), which in the previous section, is required to derive the expression of $\beta(e,x_r)$.

We replace the function $\beta$ in (\ref{tilde-theta}) by
\beeq{\label{beta-a}
\beta_a(x_r,\hat e,e) = \varsigma(\hat e+x_r)e
}
with function $\varsigma$ satisfying Assumption \ref{ass-PE-2} and $\hat e$ being the state of a filter having the form
\beeq{\label{hat-e}\ba{rcl}
\dot {\hat e} = K(\epsilon_e) + f_1(x) + \phi(\mbox{satv}(\hat\theta+\beta_a),x)  + g_1(x)u- \dot x_r(t)
\ea}
where $\epsilon_e=e-\hat e$ and function $K(\cdot)$ is a design freedom.

We then design the parameter estimator as
\begin{dmath}\label{hat-theta-f}
\dot{\hat\theta} =  - \dst\varsigma(\hat e+x_r)\left[f_1(x) + \phi(\mbox{satv}(\hat\theta+\beta_a),x) + g_1(x)u - \dot x_r\right]
 -\dst\frac{\partial\beta_a}{\partial \hat e}\dot {\hat e}  - \dst\frac{\partial\beta_a}{\partial x_r}\dot {x}_r -k_{dz}\,\mbox{dzv}(\hat\theta+\beta_a)
\end{dmath}
and the feedback law $u$ as
\beeq{\label{u-f}
u=\psi(x_r,\dot x_r,\mbox{satv}(\hat\theta + \beta_a(e,x_r)),e)\,
}
where $\varsigma(\cdot)$ and $\psi(\cdot)$ are given in Assumptions \ref{ass-PE-2} and \ref{ass-ideal-sys}, respectively.

Thus, in the extended coordinates $(e,\tilde\theta,\epsilon_e)$, the resulting extended closed-loop system can be described by
\beeq{\label{clsys-f}\ba{l}
\dot e = F(x_r,\dot x_r,\mbox{satv}(\tilde\theta + \theta),e)  - \tilde\phi_s(\theta,\tilde\theta,e+x_r)\\
\dot{\tilde\theta} = H(\theta,\tilde\theta,x_r) + \Delta_f(\theta,\tilde\theta,x_r,e,\epsilon_e)\,\\
\dot \epsilon_e = -K(\epsilon_e)  - \tilde\phi_s(\theta,\tilde\theta,e+x_r)\,
\ea}
where
\[
\Delta_f = \varsigma(x_r)\tilde\phi_s(\theta,\tilde\theta,x_r)- \varsigma(e+x_r-\epsilon_e)\tilde\phi_s(\theta,\tilde\theta,e+x_r)
\]
satisfies $\Delta_f(\theta,0,x_r,e,\epsilon_e)=\Delta_f(\theta,\tilde\theta,x_r,0,0)=0$.

Instrumental to the subsequent analysis is the following property of functions $\tilde\phi_s$ and $\Delta_f$.
\begin{lemma}\label{lemma-varrho}
Suppose that Assumptions \ref{ass-1}, \ref{ass-2} hold. There exist functions $\varrho_i\in\mathcal{K}$, $i=1,2,3$ and a constant $\varrho_1^\ast\geq0$ such that
\begin{eqnarray}
&|\tilde\phi_s(\theta,\tilde\theta,e+x_r)| \leq (\varrho_1(|e|) + \varrho_1^\ast)\gamma_s(|\tilde\theta|)\,\label{t_phi_s}\\
&|\Delta_f(\theta,0,x_r,e,\epsilon_e)| \leq (\varrho_2(|e|)+\varrho_3(|\epsilon_e|))\gamma_s(|\tilde\theta|) \label{Delta_f_b}
\end{eqnarray}
for all $x_r(t)\in\mathbb{B}_{r_1}^n$, $e\in\mathbb{R}^n$. $\triangleleft$
\end{lemma}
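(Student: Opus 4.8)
The plan is to derive both bounds from the uniform boundedness of $\mbox{satv}(\cdot)$ together with the mean value theorem applied to $\phi$ and $\varsigma$ on the relevant compact sets, exactly in the spirit of the estimates (\ref{Ineq-UP}) and (\ref{bound-varphi}). First, for (\ref{t_phi_s}), recall from (\ref{def-tphi}) that $\tilde\phi_s(\theta,\tilde\theta,e+x_r)=\phi(\mbox{satv}(\theta+\tilde\theta),e+x_r)-\phi(\mbox{satv}(\theta),e+x_r)$, where both saturated arguments lie in $\mathbb{B}^q_{l_s+0.5\epsilon_s}$. By Assumption \ref{ass-1}, $\phi$ is $C^1$ in $\theta$; applying the mean value theorem in $\theta$ and bounding the partial derivative $\partial\phi/\partial\theta$ over the compact set $\mathbb{B}^q_{l_s+0.5\epsilon_s}\times(\mathbb{B}^n_{r_1}\oplus\mathbb{B}^n_{|e|})$ by a function of the form $\kappa_1^{\rm loc}(|e|)$ (nondecreasing, as in the definition of $\kappa_1$), together with the Lipschitz-type estimate (\ref{gamma-1}) on the saturated difference, gives $|\tilde\phi_s|\le \kappa_1^{\rm loc}(|e|)\,\gamma_s(|\tilde\theta|)$. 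Splitting the bound on $\partial\phi/\partial\theta$ into its value on the steady-state ball $\mathbb{B}^n_{r_1}$ (a constant $\varrho_1^\ast$) plus the growth in $|e|$ (a class-$\mathcal{K}$ function $\varrho_1$) yields (\ref{t_phi_s}).

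For (\ref{Delta_f_b}), specialize $\Delta_f$ to $\tilde\theta=0$ isn't quite right since $\Delta_f(\theta,0,\cdot)=0$; what is actually needed is the bound with $\gamma_s(|\tilde\theta|)$ factored out, so the argument keeps $\tilde\theta$ general and uses $\Delta_f=\varsigma(x_r)\tilde\phi_s(\theta,\tilde\theta,x_r)-\varsigma(e+x_r-\epsilon_e)\tilde\phi_s(\theta,\tilde\theta,e+x_r)$. I would add and subtract $\varsigma(e+x_r-\epsilon_e)\tilde\phi_s(\theta,\tilde\theta,x_r)$ to split this into
\[
\bigl[\varsigma(x_r)-\varsigma(e+x_r-\epsilon_e)\bigr]\tilde\phi_s(\theta,\tilde\theta,x_r)
+\varsigma(e+x_r-\epsilon_e)\bigl[\tilde\phi_s(\theta,\tilde\theta,x_r)-\tilde\phi_s(\theta,\tilde\theta,e+x_r)\bigr].
\]
The first term is bounded, using continuity (local Lipschitz-ness) of $\varsigma$ on a ball of radius $r_1+|e|+|\epsilon_e|$ and estimate (\ref{t_phi_s}) evaluated at $e=0$, by something of the form $(\ell(|e|+|\epsilon_e|))\cdot\varrho_1^\ast\,\gamma_s(|\tilde\theta|)$. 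The second term is bounded, using $|\varsigma(e+x_r-\epsilon_e)|\le\kappa_4(|e|+|\epsilon_e|)+\kappa_4^\ast$ as in (\ref{kappa-34}) and the $C^1$-in-$x$ regularity of $\phi$ on the appropriate compact set, by $(\kappa_4(\cdot)+\kappa_4^\ast)\,\omega(|e|+|\epsilon_e|)\,\gamma_s(|\tilde\theta|)$ for a suitable class-$\mathcal{K}$ function $\omega$. Finally I would use $\max\{|e|,|\epsilon_e|\}\le|e|+|\epsilon_e|$ and the standard inequality $\rho(a+b)\le\rho(2a)+\rho(2b)$ for $\rho\in\mathcal{K}$ to split every mixed term into a pure function of $|e|$ plus a pure function of $|\epsilon_e|$, collecting the pieces into $\varrho_2\in\mathcal{K}$ and $\varrho_3\in\mathcal{K}$; this delivers (\ref{Delta_f_b}). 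The structural facts $\Delta_f(\theta,0,\cdot)=0$ and $\Delta_f(\theta,\tilde\theta,x_r,0,0)=0$ are consistent with this bound since $\gamma_s(0)=0$ and $\varrho_2(0)=\varrho_3(0)=0$.

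The main obstacle is bookkeeping rather than conceptual: one must be careful that all the compact sets over which partial derivatives of $\phi$ and the modulus of continuity of $\varsigma$ are taken have radii depending only on $|e|$ and $|\epsilon_e|$ (and the fixed constants $r_1$, $l_s$, $\epsilon_s$), so that the resulting majorants are genuinely nondecreasing functions of $|e|$ and $|\epsilon_e|$ alone, and then that the $\mathcal{K}$-class additivity is applied correctly so that the cross-dependence on $|e|+|\epsilon_e|$ separates into the claimed sum $\varrho_2(|e|)+\varrho_3(|\epsilon_e|)$. No single step is hard, but the decomposition of $\Delta_f$ must be chosen so that $\gamma_s(|\tilde\theta|)$ cleanly factors out of every term, which is what forces the add-and-subtract step above rather than a naive triangle-inequality split.
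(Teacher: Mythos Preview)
Your proposal is correct and in the same spirit as the paper, but the paper is much more terse. For (\ref{t_phi_s}) the paper simply invokes the already-established bound (\ref{Ineq-UP}) and sets $\varrho_1(s)=\kappa_1(s)-\kappa_1(0)$, $\varrho_1^\ast=\kappa_1(0)$; your argument rebuilds (\ref{Ineq-UP}) from the mean value theorem and then performs the same split, so the content is identical. For (\ref{Delta_f_b}) the paper just says ``similar to Lemma~\ref{lemma-varphi}'': the intended argument is to write $\Delta_f=\Psi(\mbox{satv}(\theta+\tilde\theta),\cdot)-\Psi(\theta,\cdot)$ for the single auxiliary function $\Psi(\theta',x_r,e,\epsilon_e)=\varsigma(x_r)\phi(\theta',x_r)-\varsigma(e+x_r-\epsilon_e)\phi(\theta',e+x_r)$, observe $\Psi(\theta',x_r,0,0)\equiv0$, apply the mean value theorem in $\theta'$, and then split the resulting $\mathcal{K}$-bound on $|(e,\epsilon_e)|$ into $\varrho_2(|e|)+\varrho_3(|\epsilon_e|)$. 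Your add-and-subtract decomposition achieves the same end through two separate estimates (one on the $\varsigma$-difference, one on the $\tilde\phi_s$-difference) and then a further Young-type split to separate the mixed $|e|\cdot|\epsilon_e|$ products; this is slightly longer but equally valid, and has the minor advantage of making explicit which regularity is used where. One small over-statement: you invoke ``$C^1$-in-$x$ regularity of $\phi$'', but only joint continuity of $\partial\phi/\partial\theta$ in $(\theta,x)$ is needed (and is what the proof of Lemma~\ref{lemma-varphi} already uses). You also correctly noticed that the ``$\tilde\theta=0$'' in the statement of (\ref{Delta_f_b}) is a typo for general $\tilde\theta$.
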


With (\ref{Ineq-UP}), the proof of (\ref{t_phi_s}) in Lemma \ref{lemma-varrho} is straightforward by letting $\varrho_1(s)=\kappa_1(s)-\kappa_1(0)$ and $\varrho_1^\ast=\kappa_1(0)$, while
the proof of (\ref{Delta_f_b}) is similar to that of Lemma \ref{lemma-varphi} and is thus omitted.
Following Lemma \ref{lemma-non-ex} and Theorem \ref{theo-general}, both $e$ and $\tilde\theta$ subsystems are uniformly ISS with an appropriate choice of $\psi(\cdot)$. On the other hand, for the $\epsilon_e$ subsystem, the stabilizing term $K(\epsilon_e)$ can always be chosen such that the $\epsilon_e$-subsystem is also uniformly ISS with respect to state $\epsilon_e$ and inputs $e,\tilde\theta$. In this way, the extended system (\ref{clsys-f}) is a feedback interconnection of three uniform ISS subsystems, for which Theorem \ref{theo-csgt} can be employed to show  uniform global asymptotic stability. Motivated by these observations, in what follows a sufficient condition on $\psi(\cdot)$ and $K(\cdot)$ is presented to achieve the uniform global asymptotic stability of the extended system  (\ref{clsys-f}), with the proof given in Appendix \ref{app-proof-theo-f}.

\begin{theorem}\label{theo-f}
Consider system (\ref{sys-x}) with filter (\ref{hat-e}), parameter estimator (\ref{hat-theta-f}) and feedback controller (\ref{u-1}). Suppose that Assumptions \ref{ass-PE-2}, \ref{ass-ideal-sys} hold, and there exists a constant $\tau_{err}>1$ such that
\beeq{\label{alpha-3-f}
\alpha_3(s)\geq \tau_{err}\gamma_s\circ a^\ast l_{\gamma}\varrho_2\circ\alpha_1^{-1}\circ\alpha_2(s)\cdot \alpha_4(s)\cdot\kappa_1(s)\,
}
for all $s\in\mathbb{R}_+$. Suppose there exist functions $K_i(\cdot)$, $i=1,2$ such that
\beeq{\ba{l}
\epsilon_e^{\top}K_1(\epsilon_e) \geq |\epsilon_e|\cdot\pi_{\epsilon,e}\circ\check\gamma_{e,\tilde\theta}\circ\check\gamma_{\tilde\theta,\epsilon}(|\epsilon_e|^2)\,\\
\epsilon_e^{\top}K_2(\epsilon_e) \geq |\epsilon_e|\cdot\pi_{\epsilon,\tilde\theta}\circ\check\gamma_{\tilde\theta,\epsilon}(|\epsilon_e|^2)
\ea}
where
\begin{eqnarray}
\check\gamma_{\tilde\theta,e}(s) &=& a_1(\tau_{est}a^\ast l_\gamma)^2[\varrho_2\circ\alpha_1^{-1}(s)]^2\,\\
\check\gamma_{\tilde\theta,\epsilon}(s) &=& \dfrac{4\tau_{est}^2}{(\tau_{est}-1)^2}a_1(\tau_{est}a^\ast l_\gamma)^2[\varrho_3(\sqrt{s})]^2\,\\
\check\gamma_{e,\tilde\theta}(s) &=& \check\gamma_{\tilde\theta,e}^{-1}(\frac{\tau_{est}}{\tau_{err}'}s)\,\\
\pi_{\epsilon,\tilde\theta}(s) &=& \varrho_1^\ast\gamma_s(\sqrt{\frac{s}{a_1}}) +\frac{1}{4}\gamma_s(\sqrt{\frac{s}{a_1}})^2\, \label{pi-theta}\\
\pi_{\epsilon,e}(s) &=& [\varrho_1\circ\alpha_1^{-1}(s)]^2
\end{eqnarray}
with $\tau_{err}>\tau_{err}'>\tau_{est}>1$.
Then choosing
\beeq{
K(\epsilon_e) = k_\epsilon \epsilon_e + K_1(\epsilon_e) + K_2(\epsilon_e)
}
with $k_\epsilon>0$, the zero equilibrium point of the extended system (\ref{clsys-f}) is uniformly globally asymptotically stable.  $\square$
\end{theorem}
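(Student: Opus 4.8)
The plan is to realize the extended closed-loop system (\ref{clsys-f}) as a feedback interconnection of three uniform ISS subsystems and then invoke the cyclic-small-gain theorem (Theorem \ref{theo-csgt}) with $N=3$. First I would treat the $e$-subsystem exactly as in the proof of Theorem \ref{theo-general}: using the Lyapunov function $V_{err}(t,e)$ from Assumption \ref{ass-ideal-sys} together with the bound (\ref{Ineq-UP}) on $\tilde\phi_s$, the derivative $\dot V_{err}$ picks up only a $\tilde\theta$-dependent cross term (the filter state $\epsilon_e$ does not enter the $e$-equation of (\ref{clsys-f}) at all, which is the point of the lower-triangular structure), so under (\ref{alpha-3-f}) the $e$-subsystem is uniformly ISS with respect to state $e$ and input $\tilde\theta$, with ISS gain $\gamma_{e,\tilde\theta}$ inherited from the small-gain margin $\tau_{err}>\tau_{err}'$.

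Second, for the $\tilde\theta$-subsystem I would reuse Lemma \ref{lemma-1} (which gives $V_{est}$ satisfying (\ref{Vest-a})--(\ref{Vest-c}) for the auxiliary dynamics $H$) and differentiate $V_{est}$ along the actual $\tilde\theta$-equation. The new feature versus Lemma \ref{lemma-non-ex} is that the perturbation term is now $\Delta_f$ rather than $\Delta$, and by (\ref{Delta_f_b}) of Lemma \ref{lemma-varrho} its size is bounded by $(\varrho_2(|e|)+\varrho_3(|\epsilon_e|))\gamma_s(|\tilde\theta|)$. Completing the square as in Lemma \ref{lemma-non-ex}, but now splitting the cross term into an $e$-part and an $\epsilon_e$-part (weighted by the factor $\tau_{est}/(\tau_{est}-1)$ and its complement, which accounts for the $4\tau_{est}^2/(\tau_{est}-1)^2$ constant in $\check\gamma_{\tilde\theta,\epsilon}$), I obtain that the $\tilde\theta$-subsystem is uniformly ISS with respect to state $\tilde\theta$ and inputs $e,\epsilon_e$, with ISS gains (in the $V$-variables) given by $\check\gamma_{\tilde\theta,e}$ and $\check\gamma_{\tilde\theta,\epsilon}$.

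Third, for the $\epsilon_e$-subsystem $\dot\epsilon_e=-K(\epsilon_e)-\tilde\phi_s(\theta,\tilde\theta,e+x_r)$ I would take $V_{\epsilon}=|\epsilon_e|^2$ (or $|\epsilon_e|$) and use the bound (\ref{t_phi_s}), $|\tilde\phi_s|\le(\varrho_1(|e|)+\varrho_1^\ast)\gamma_s(|\tilde\theta|)$. Splitting $\varrho_1(|e|)\gamma_s(|\tilde\theta|)$ against the $K_1$ term and $\varrho_1^\ast\gamma_s(|\tilde\theta|)$ plus the quadratic remainder against the $K_2$ term — matching precisely the definitions (\ref{pi-theta}) of $\pi_{\epsilon,\tilde\theta}$ and of $\pi_{\epsilon,e}$ — and using that $k_\epsilon\epsilon_e$ provides the negative-definite margin, I get that $K(\epsilon_e)=k_\epsilon\epsilon_e+K_1(\epsilon_e)+K_2(\epsilon_e)$ renders the $\epsilon_e$-subsystem uniformly ISS with respect to state $\epsilon_e$ and inputs $e,\tilde\theta$, and — crucially — with gains already composed so that the relevant cyclic compositions are dominated by $\mathrm{Id}$. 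Finally I would verify the cyclic-small-gain condition (\ref{csmc}) for every simple cycle of the three-node gain digraph: the $2$-cycle $\tilde\theta\to e\to\tilde\theta$ is handled by (\ref{alpha-3-f}) with margin $\tau_{err}/\tau_{err}'>1$ exactly as in Theorem \ref{theo-general}; the $2$-cycle $\tilde\theta\to\epsilon_e\to\tilde\theta$ and the $3$-cycle through all three nodes are handled by the way $K_1,K_2$ were chosen, since the hypotheses on $\epsilon_e^\top K_i(\epsilon_e)$ are literally the statements that those cyclic compositions are $<\mathrm{Id}$ (note $e$ has no incoming edge from $\epsilon_e$, so $e\to\tilde\theta\to\epsilon_e\to e$ is the only $3$-cycle and there is no $\epsilon_e\to e$ edge, which actually kills it, leaving only the two $2$-cycles and forcing a careful bookkeeping of which cycles genuinely occur). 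Theorem \ref{theo-csgt} with $N=3$ then yields uniform global asymptotic stability of the origin of (\ref{clsys-f}).

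The main obstacle I anticipate is the bookkeeping of the gain digraph and the apportionment of the cross terms: one must split each perturbation bound into exactly the right pieces so that, after composing along each simple cycle, the constants $\tau_{est},\tau_{err}',\tau_{err}$ combine to leave a strict-less-than-identity margin, and one must make sure the composed gains $\pi_{\epsilon,e}\circ\check\gamma_{e,\tilde\theta}\circ\check\gamma_{\tilde\theta,\epsilon}$ and $\pi_{\epsilon,\tilde\theta}\circ\check\gamma_{\tilde\theta,\epsilon}$ appearing in the hypotheses on $K_1,K_2$ are class-$\mathcal{K}$ and that the resulting $K$ is well-defined and smooth (this is where $\gamma_s\in\mathcal{K}\setminus\mathcal{K}_\infty$ and the $\gamma_s\circ\gamma_s^\ominus\le\mathrm{Id}$ type identities from the Notation paragraph get used). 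A secondary subtlety is that Assumption \ref{ass-1} is not listed among the hypotheses of the theorem, so I would need to check that the $C^1$ regularity of $\phi$ in $\theta$ used in Lemmas \ref{lemma-varphi}, \ref{lemma-varrho} is in fact subsumed by Assumption \ref{ass-PE-2} or else note that it should be included.
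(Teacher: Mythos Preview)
Your plan is essentially the paper's own proof: establish uniform ISS for each of the three subsystems $e$, $\tilde\theta$, $\epsilon_e$ with the Lyapunov functions $V_{err}$, $V_{est}$, $V_\epsilon=|\epsilon_e|^2$, and close with the cyclic-small-gain Theorem~\ref{theo-csgt}. The handling of the $e$-subsystem via (\ref{alpha-3-f}), of the $\tilde\theta$-subsystem via (\ref{Delta_f_b}) with the two-input split, and of the $\epsilon_e$-subsystem via (\ref{t_phi_s}) with a Young-type split matching $\pi_{\epsilon,e}$ and $\pi_{\epsilon,\tilde\theta}$ are all exactly what the paper does.

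There is, however, one genuine slip in your cycle bookkeeping. You correctly observe that there is no gain $\gamma_{e,\epsilon}$ (the $e$-equation in (\ref{clsys-f}) does not depend on $\epsilon_e$), but from this you conclude that \emph{no} 3-cycle survives. That is wrong: only one orientation of the 3-cycle is killed. The gain digraph has edges $\tilde\theta\!\to\! e$, $e\!\to\!\tilde\theta$, $\epsilon_e\!\to\!\tilde\theta$, $e\!\to\!\epsilon_e$, $\tilde\theta\!\to\!\epsilon_e$ (and no $\epsilon_e\!\to\! e$), so the directed 3-cycle $e\!\to\!\epsilon_e\!\to\!\tilde\theta\!\to\! e$ \emph{does} exist, with composition $\check\gamma_{e,\tilde\theta}\circ\check\gamma_{\tilde\theta,\epsilon}\circ\check\gamma_{\epsilon,e}$. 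The paper verifies three cycles, not two: the two 2-cycles you list, plus this 3-cycle. The $K_1$ hypothesis is exactly what makes this 3-cycle contract: from the $V_\epsilon$ estimate and $\epsilon_e^\top K_1(\epsilon_e)\ge |\epsilon_e|\cdot\pi_{\epsilon,e}\circ\check\gamma_{e,\tilde\theta}\circ\check\gamma_{\tilde\theta,\epsilon}(|\epsilon_e|^2)$ one extracts $\check\gamma_{\epsilon,e}(s)=\check\gamma_{\tilde\theta,\epsilon}^{-1}\circ\check\gamma_{e,\tilde\theta}^{-1}\circ\pi_{\epsilon,e}^{-1}\circ\tfrac12\pi_{\epsilon,e}(s)$, whence the 3-cycle composition equals $\pi_{\epsilon,e}^{-1}\circ\tfrac12\pi_{\epsilon,e}<\mathrm{Id}$. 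Your earlier intuition that ``the hypotheses on $\epsilon_e^\top K_i(\epsilon_e)$ are literally the statements that those cyclic compositions are $<\mathrm{Id}$'' was right; it is precisely the 3-cycle (for $K_1$) and the $\tilde\theta\leftrightarrow\epsilon_e$ 2-cycle (for $K_2$) that they enforce, so do not discard the 3-cycle.
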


\begin{remark}
As a particular case, (\ref{beta-a}) can be replaced by $\beta_a(x_r,e) = \varsigma(x_r)e$. In this case, the filter (\ref{hat-e}) is not required. However, since the variable $e$ is not included in the function $\varsigma$, the resulting $\kappa_2$ in Lemma \ref{lemma-1} cannot be shaped by the design freedom $\varsigma$, making the corresponding condition (\ref{alpha-3}) or (\ref{r-alpha-3}) more restrictive. $\square$
\end{remark}

\section{Adaptive Tracking of Series Elastic Actuators}
\label{sec-SEAs}

In this section, we demonstrate how to use the proposed adaptive control scheme to deal with the tracking problem for series elastic actuators (SEAs), which can be described by the following differential equations:
\beeq{\label{sys-SEAs}\ba{rcl}
m \ddot d &=& - F_s(d) - \mu_v \dot d + c_f i\,\\
L\dot i &=& - R i - c_b \dot d + V_{in}
\ea}
where $d$ and $i$ denote the spring deflection and the armature current, respectively. The control input $V_{in}$ is the armature voltage, and the function $F_s(d)$ denotes the elastic force of the nonlinear spring \cite{Stulov(2003),Boisseau(2012)}, which can be approximately modelled by a power law of the form
$
F_s(d)=Q_0 d|d|^p
$
with unknown positive constants $Q_0$ and $p$, taking values in some known compact sets, i.e., $0< Q_{0,l}\leq Q_0\leq Q_{0,u}$ and $0<\underline p \leq p \leq\bar p$.
The quantity $m$ is the mass of the moving parts, $\mu_v$ is the viscous friction constant, $c_f$ is the force constant, $c_b$ is the back-electromotive-force constant, and $L$ and $R$ are respectively, the inductance and the resistance of the armature. The control problem is to adjust the DC motor in order to drive the moving end of the spring to follow a trajectory $d_r(t)=\mbox{exp}(\sin t)$, i.e., $\lim\limits_{t\rightarrow\infty}|d(t)-d_r(t)|=0$.

In this setting, let $x_1=d$, $x_2=\dot d$, $x_3=\frac{c_f}{m}i$, $u=\frac{c_f}{mL} V_{in}- \frac{c_fR}{mL} x_3 - \frac{c_fc_b}{mL} x_2$, which transforms (\ref{sys-SEAs}) into the form of (\ref{sys-x}) as
\beeq{\label{sys-x-SEA}\ba{rcl}
\dot x_1 &=& x_2\,\\
\dot x_2 &=& -\phi(\theta,x_1) - b_3 x_2 +  x_3\,\\
\dot x_3 &=&  u
\ea}
where $\phi(\theta,x_1)$ is a re-parameterized function of the form
\[
\dst\phi(\theta,x_1) = b_1x_1 \mbox{exp}(b_2\theta_1) |x_1|^{\theta_2+p^\ast}
\]
with $p^\ast=0.5(\bar p+\underline p)$,
\[\ba{l}
b_1=\frac{Q_{0,l}+Q_{0,u}}{2m}\,,\\
b_2 = \frac{2}{\underline p}\max\left\{\log\frac{2Q_{0,u}}{Q_{0,u}+Q_{0,l}}, \log\frac{Q_{0,u}+Q_{0,l}}{2Q_{0,l}}\right\}\,\\
b_3= \frac{\mu_v}{m}
\ea\]
and the unknown parameter vector
\[
\theta=\begin{pmatrix}\theta_1 \cr \theta_2\end{pmatrix}:=\begin{pmatrix}\dfrac{1}{b_2}\log\left(\dfrac{2Q_0}{ Q_{0,l}+Q_{0,u}}\right)\cr p-0.5(\underline p+ \bar p) \end{pmatrix}\,.
\]
It is clear that $\theta_1\in[-0.5 \underline p,0.5\underline p]$ and $\theta_2\in[-0.5 (\bar p-\underline p), 0.5 (\bar p- \underline p)]$, leading to $|\theta|\leq l_\theta := 0.5 \bar p$.

In this setting, we now proceed to deal with the adaptive tracking problem of the nonlinearly parameterized system (\ref{sys-x-SEA}). It is observed that the uncertain parameter $\theta$ appears in the equation of $\dot e_2$, rather than that of the control $u$, which means that the matching condition is not satisfied. To overcome this obstacle, the backstepping technique will be employed. More specifically, the whole design will be divided into three steps as below.

{\bf Step 1:} Defining $e_1=x_1-d_r(t)$ and $e_2=x_2 - \tau_2$, we rewrite
\beeq{\label{e1-SEA}
\dot e_1 = e_2 + \tau_2 -\dot d_r\,.
}
By choosing $\tau_2= -(k_1+0.5) e_1+\dot d_r$ with $k_1>1$ and the Lyapunov function $V_1(e_1)=|e_1|^2$, we have
\[
\dot V_1 = - (2k_1+1) |e_1|^2 + 2 e_1 e_2\,.
\]
This, by setting $V_2(e_2)=|e_2|^2$ and
\beeq{\label{gamma-12}
\gamma_{1,2}(s)=\frac{1}{k_1^2}s\,,\quad s\in\mathbb{R}_+\,,
}
implies
\beeq{
V_1 \geq \gamma_{1,2}(V_2) \,\Longrightarrow\, \dot V_1 \leq -  V_1\,.
}

{\bf Step 2:}
We now proceed to the second equation of (\ref{sys-x-SEA}) by viewing $x_3$ as the control variable. Defining
$e_3 = x_3 - \tau_3$, we compute the derivative of $e_2$ as
\beeq{\label{e2-MEAs-1}
\dot e_2 = e_3 + \tau_3 - \phi(\theta,e_1+d_r)  + f_2
}
where
\[
f_2=\left(k_1+0.5-b_3\right)(e_2+\tau_2) -(k_1+0.5)\dot d_r - \ddot d_r\,.
\]

Following the proposed design method, we  design the parameter estimator as
\beeq{\ba{l}
\dot{\hat\theta} = -\varsigma(d_r+e_1)[e_3 + \tau_3-\phi(\mbox{satv}(\hat\theta+\beta),e_1+d_r) + f_2]\,\\ \qquad - \frac{\partial\beta}{\partial e_1}\dot e_1 - \frac{\partial\beta}{\partial d_r}\dot d_r - k_{dz}\mbox{dzv}(\hat\theta+\beta)\,,
\ea}
where $\mbox{satv}(s_1,s_2)=\col(\mbox{sat}(s_1),\mbox{sat}(s_2))$ with $\mbox{sat}(\cdot)$  a smooth saturation function with saturation level $l_s> l_\theta$ and $|\mbox{sat}(s)|< p^\ast$ for all $s\in\mathbb{R}$. We choose $\beta(d_r,e_1,e_2) = \varsigma(d_r+e_1)e_2$ and
\[
\varsigma(d_r+e_1)= -(e_1+d_r)\begin{pmatrix} b_2 \cr \log|e_1+d_r|\end{pmatrix}\,.
\]
Thus, by setting $\tilde\theta = \hat\theta - \theta + \beta$, we obtain
\beeq{\label{estsys-SEA}
\dot {\tilde\theta} = -\varsigma(d_r+e_1)\tilde\phi_{s}(\theta,\tilde\theta,e_1+d_r)- k_{dz}\mbox{dzv}(\tilde\theta+\theta)
}
where
\[
\tilde\phi_{s}(\theta,\tilde\theta,e_1+d_r)=\phi(\theta,e_1+d_r)-\phi(\mbox{satv}(\tilde\theta+\theta),e_1+d_r)\,.
\]
\begin{lemma}\label{lemma-SEA}
There exists a $k_{dz}^\ast>0$ such that for all $k_{dz}> k_{dz}^\ast$, system (\ref{estsys-SEA}) admits a uniform ISS Lyapunov function $V_{\tilde\theta}(t,\tilde\theta)$ such that
\beeq{\label{SEA-V-theta}\ba{l}
a_1|\tilde\theta|^2 \leq V_{\tilde\theta}(t,\tilde\theta) \leq a_2|\tilde\theta|^2\,\\
V_{\tilde\theta} \geq \gamma_{\tilde\theta,1}(V_1) \,\Longrightarrow \, \dot V_{\tilde\theta}(t,\tilde\theta) \leq -a_3V_{\tilde\theta}\,
\ea}
for some constants $a_i>0$, $i=1,\ldots,3$ and a class $\mathcal{K}_{\infty}$ function $\gamma_{\tilde\theta,1}$, satisfying $\gamma_{\tilde\theta,1}(s)\in\mathcal{O}(s)$ as $s\rightarrow 0^+$. $\square$
\end{lemma}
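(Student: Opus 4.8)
The plan is to recognise Lemma~\ref{lemma-SEA} as the specialisation of Lemmas~\ref{lemma-1} and~\ref{lemma-non-ex} to the SEA data, read with $n=1$, $q=2$, reference $x_r=d_r$, and with the scalar $e_1$ in the role of the tracking error $e$ that drives the estimator. Indeed (\ref{estsys-SEA}) is an instance of (\ref{tilde-theta-2})--(\ref{def-H}): its autonomous part is $H(\theta,\tilde\theta,d_r)=-\varsigma(d_r)\tilde\phi_s(\theta,\tilde\theta,d_r)-k_{dz}\mbox{dzv}(\tilde\theta+\theta)$, and the remainder $\Delta=\varsigma(d_r)\tilde\phi_s(\theta,\tilde\theta,d_r)-\varsigma(d_r+e_1)\tilde\phi_s(\theta,\tilde\theta,e_1+d_r)$ is a $\varphi$-difference of the kind estimated in Lemma~\ref{lemma-varphi}. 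So the conclusions of those lemmas will deliver (\ref{SEA-V-theta}) once their hypotheses are verified for the SEA. Assumptions~\ref{ass-1} and~\ref{ass-2} are immediate: $\phi(\theta,x_1)=b_1 x_1\exp(b_2\theta_1)|x_1|^{\theta_2+p^\ast}$ is $C^1$ in $\theta$ on the domain that is actually visited (the vector saturation keeps the exponent $\mbox{sat}(\cdot)+p^\ast$ strictly positive, and $\theta_2+p^\ast\in[\underline p,\bar p]$ for the true $\theta\in\Theta$), while $d_r(t)=\exp(\sin t)$ is $C^1$, known a priori, and lies in $\mathbb{B}_{r_1}^1$ with $r_1=\exp(1)$.

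The substantive step, and the one I expect to be the main obstacle, is verifying Assumption~\ref{ass-PE-2} for the chosen $\varsigma$. Because the uncertain term enters (\ref{sys-x-SEA}) as $-\phi(\theta,x_1)$, the condition is to be checked for $-\phi$. Put $w(t):=\col(b_2,\sin t)$; since $\log d_r(t)=\sin t$ the chosen $\varsigma$ satisfies $\varsigma(d_r)=-d_r\,w(t)$, and a short computation gives
\[
\varsigma(d_r)\bigl(-\phi(\theta,d_r)\bigr)=c(t)\,\exp\!\bigl(w(t)^{\top}\theta\bigr)\,w(t),\qquad c(t):=b_1 d_r(t)^{2+p^\ast}>0,
\]
i.e. $\varsigma(d_r)(-\phi(\cdot,d_r))$ is a strictly positive multiple of the gradient of the \emph{convex} map $\theta\mapsto c(t)\exp(w(t)^{\top}\theta)$. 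Hence, for all $\theta,\theta'$, $(\theta'-\theta)^{\top}\varsigma(d_r)[(-\phi(\theta',d_r))-(-\phi(\theta,d_r))]=c(t)\exp(\xi)\,(\theta'-\theta)^{\top}w(t)w(t)^{\top}(\theta'-\theta)\ge0$ for some $\xi$ between $w(t)^{\top}\theta$ and $w(t)^{\top}\theta'$ (mean value theorem plus monotonicity of $\exp$). On the compact ranges $|\theta|\le l_\theta$, $|\theta'|\le l_s$, and with $|w(t)|\le\sqrt{b_2^2+1}$, the scalar $c(t)\exp(\xi)$ is bounded below by some $\underline c>0$, so (\ref{ineq-ND}) holds with $M_1(d_r)=\underline c\,w(t)w(t)^{\top}\ge0$. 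The persistency-of-excitation bound (\ref{ineq-PE}) then follows with $\delta=2\pi$, because by $2\pi$-periodicity $\int_t^{t+2\pi}w(\tau)w(\tau)^{\top}\,d\tau=\diag(2\pi b_2^2,\pi)$ for every $t$ (the off-diagonal $b_2\int_t^{t+2\pi}\sin\tau\,d\tau$ vanishes and $\int_t^{t+2\pi}\sin^2\tau\,d\tau=\pi$), whence $\int_t^{t+2\pi}(M_1+M_1^{\top})(d_r(\tau))\,d\tau\ge\mu I$ with $\mu=2\underline c\min\{2\pi b_2^2,\pi\}>0$; this is precisely where the choice $d_r=\exp(\sin t)$, for which $\log d_r$ equals the exciting signal $\sin t$, is used. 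The requirement $l_s>\sqrt q\,l_\theta$ is met by taking the saturation level large enough while keeping the exponent positive.

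With Assumptions~\ref{ass-1}--\ref{ass-PE-2} in force, Lemma~\ref{lemma-1} supplies a $k_{dz}^\ast>0$ and, for $k_{dz}>k_{dz}^\ast$, a $C^1$ function $V_{\tilde\theta}$ with $a_1|\tilde\theta|^2\le V_{\tilde\theta}\le a_2|\tilde\theta|^2$, $|\partial V_{\tilde\theta}/\partial\tilde\theta|\le a_4|\tilde\theta|$, and $\partial_t V_{\tilde\theta}+(\partial V_{\tilde\theta}/\partial\tilde\theta)H(\theta,\tilde\theta,d_r)\le-a_0|\tilde\theta|^2$ for some $a_0>0$---the first line of (\ref{SEA-V-theta}). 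Bounding $\Delta$ via Lemma~\ref{lemma-varphi} and (\ref{L-gamma}) as $|\Delta|\le\kappa_2(|e_1|)\gamma_s(|\tilde\theta|)\le l_\gamma\kappa_2(|e_1|)$, the completion-of-squares argument in the proof of Lemma~\ref{lemma-non-ex} gives $V_{\tilde\theta}\ge\sigma_{\tilde\theta,e}(|e_1|)\Rightarrow\dot V_{\tilde\theta}\le-\tfrac{a_0(\tau_{est}-1)}{\tau_{est}}|\tilde\theta|^2$ with $\sigma_{\tilde\theta,e}$ as in (\ref{sigma-21}); replacing $|\tilde\theta|^2$ by $V_{\tilde\theta}/a_2$ on the right and using $|e_1|=\sqrt{V_1}$ inside $\sigma_{\tilde\theta,e}$ turns this into the second line of (\ref{SEA-V-theta}), with $a_3:=a_0(\tau_{est}-1)/(\tau_{est}a_2)$ and $\gamma_{\tilde\theta,1}(s):=\sigma_{\tilde\theta,e}(\sqrt s)=a_1(\tau_{est}a^\ast l_\gamma)^2\kappa_2(\sqrt s)^2$. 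It remains to check the order of $\gamma_{\tilde\theta,1}$ at the origin: for $e_1$ near $0$ the argument $x_1=e_1+d_r$ stays in a compact set bounded away from the origin (as $d_r\ge\exp(-1)$), so $e_1\mapsto\varphi(\cdot,d_r,e_1+d_r)$ is $C^1$ there and vanishes at $e_1=0$; a mean value estimate then yields $\kappa_2(r)=\mathcal{O}(r)$ as $r\to0^+$, hence $\gamma_{\tilde\theta,1}(s)=\mathcal{O}(s)$ as $s\to0^+$, and $\gamma_{\tilde\theta,1}$ may be taken in $\mathcal{K}_\infty$ after a harmless overbounding. Everything except the two observations of the previous paragraph---the gradient-of-a-convex-function structure that makes (\ref{ineq-ND}) automatic with a rank-one $M_1$, and the period-averaging that makes $w(t)w(t)^{\top}$ integrate to a positive definite matrix---is a routine specialisation of Lemmas~\ref{lemma-1},~\ref{lemma-non-ex}, and~\ref{lemma-varphi}.
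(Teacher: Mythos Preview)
Your proposal is correct and follows essentially the same route as the paper's proof: both reduce the claim to (i) verifying Assumption~\ref{ass-PE-2} for the SEA data with the chosen $\varsigma$, then invoking Lemmas~\ref{lemma-1} and~\ref{lemma-non-ex}, and (ii) checking that $\kappa_2(s)\in\mathcal{O}(s)$ near the origin so that $\gamma_{\tilde\theta,1}(s)\in\mathcal{O}(s)$. Your convexity/gradient framing of (\ref{ineq-ND}) (recognising $\varsigma(d_r)(-\phi(\cdot,d_r))$ as the gradient of $c(t)\exp(w(t)^{\top}\theta)$) is equivalent to the paper's direct Jacobian computation $\varsigma(d_r)\partial\phi/\partial\theta$, and your explicit handling of the sign (checking the condition for $-\phi$, consistent with the minus in $\varsigma$) and of the PE integral over one period $\delta=2\pi$ makes steps precise that the paper leaves to ``simple calculations''.
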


The proof of Lemma \ref{lemma-SEA} is given in Appendix \ref{app-sec-proof-Pro1}. Note that since $\gamma_{\tilde\theta,1}(s)\leq \bar \gamma_{\tilde\theta,1}(s) s$ for some $\bar\gamma_{\tilde\theta,1}\in\mathcal{SN}$ and all $s\in\mathbb{R}_+$, and $\gamma_s$ satisfies (\ref{L-gamma}) with $l_\gamma=2p^\ast$, it can be seen that for $s> 1$,
\[
\gamma_s\left(\sqrt{{\gamma_{\tilde\theta,1}(s)}/{a_1}}\right)^2 \leq l_\gamma^2 s
\]
and for $0\leq s\leq 1$,
\[
\gamma_s\left(\sqrt{{\gamma_{\tilde\theta,1}(s)}/{a_1}}\right)^2 \leq {\gamma_{\tilde\theta,1}(s)}/{a_1} \leq g_1 s
\]
with $g_1=\max_{0\leq s\leq 1}\frac{\bar\gamma_{\tilde\theta,1}(s)}{a_1}$.
Thus, there exists a constant $g>0$ such that
\[
\gamma_s\left(\sqrt{{\gamma_{\tilde\theta,1}(s)}/{a_1}}\right)^2 \leq  g^2\,s \,,\quad \forall s\in\mathbb{R}_+\,.
\]

With this being the case, we turn to consider (\ref{e2-MEAs-1}) and choose
\beeq{\label{tau2-MEA}\ba{l}
\tau_3=-k_{21} e_2 - k_{22}e_2|e_2|^{2p^\ast+1}-f_2 \,\\ \qquad + \phi(\mbox{satv}(\hat\theta+\beta),e_1+d_r)\,.
\ea}
This leads to
\beeq{\label{e2-SEAs-2}\ba{l}
\dot e_2 = e_3-k_{21} e_2 - k_{22}e_2|e_2|^{2p^\ast+1} \,\\ \qquad -\tilde\phi_{s}(\theta,\tilde\theta,e_1+d_r)
\ea}
where by some simple but lengthy calculations, the last term satisfies
\[
|\tilde\phi_{s}(\theta,\tilde\theta,e_1+d_r)| \leq (\delta_1 |e_1|^{2p^\ast+2} + \delta_2)\gamma_s(|\tilde\theta|)
\]
for some constants $\delta_i>0$, $i=1,2$.

Thus, computing the derivative of $V_2$ along (\ref{e2-SEAs-2}) yields
\[\ba{l}
\dot V_2 \leq -2|e_2|( k_{21}|e_2| - |e_3|- \delta_2\gamma_s(|\tilde\theta|)) \, \\ \qquad -  2|e_2|(k_{22}|e_2|^{2p^\ast+2}-\delta_1 l_\gamma|e_1|^{2p^\ast+2}) \,\\
\ea\]
which in turn implies that system (\ref{e2-SEAs-2}) is uniformly ISS with respect to inputs $e_1,\tilde\theta,e_3$, with an ISS Lyapunov function $V_{2}(e_2)=|e_2|^2$, satisfying
\beeq{\label{V2-SEAs}\ba{l}
V_{2} \geq \mbox\{\gamma_{2,1}(V_1),\gamma_{2,\tilde\theta}(V_{\tilde\theta}),\gamma_{2,3}(V_3)\} \,\Longrightarrow \, \dot V_{2} \leq -V_{2}
\ea}
with $V_3=|e_3|^2$, $k_{21}>g\delta_2 + 2.5$ and $k_{22}\geq \delta_1l_\gamma$, and
\beeq{\ba{l}
\gamma_{2,1}(s) = s\,\\
\gamma_{2,\tilde\theta}(s) = \dfrac{\delta_2^2}{(k_{21}-2.5)^2}\gamma_s\left(\sqrt{{s}/{a_1}}\right)^2\,\\
\gamma_{2,3}(s) = 0.5s\,.
\ea}
Importantly, with the above construction we have
\beeq{\label{SEA-csmc-1}\ba{l}
\gamma_{1,2}\circ\gamma_{2,1} < \mbox{Id} \,\\
\gamma_{1,2}\circ\gamma_{2,\tilde\theta}\circ\gamma_{\tilde\theta,1} < \mbox{Id}\,.
\ea
}

{\bf Step 3:} At this final step, the actual control law $u$ will be designed. Computing the time derivative of $e_3$ yields
\beeq{\label{e3-SEAs}\ba{l}
\dot e_3 = u - \dfrac{\partial\tau_3}{\partial t}- \dfrac{\partial\tau_3}{\partial e_1}\dot e_1 - \dfrac{\partial\tau_3}{\partial e_2}\dot e_2 - \dfrac{\partial\tau_3}{\partial \hat\theta}\dot {\hat\theta}\,.
\ea}
Choosing
\beeq{\label{u-MEAs}\ba{rcl}
u &=& \bar u + \dfrac{\partial\tau_3}{\partial t}+ \dfrac{\partial\tau_3}{\partial e_1}\dot e_1  + \dfrac{\partial\tau_3}{\partial \hat\theta}\dot {\hat\theta} \,\\ && + \dfrac{\partial\tau_3}{\partial e_2}(e_3-k_{21} e_2 - k_{22}e_2|e_2|^{4p^\ast+4})
\ea}
with $\bar u$ being the residual control to be determined,
we compute the derivative of the Lyapunov function $V_3=|e_3|^2$ as
\[\ba{l}
\dot V_3 = 2e_3\bar u + 2e_3\dfrac{\partial\tau_2}{\partial e_2}\tilde\phi_{s}(\theta,\tilde\theta,e_1+d_r)
\ea\]
where
\[\ba{l}
\dfrac{\partial\tau_2}{\partial e_2} = - k_{21} - k_{22}(2p^\ast+2)e_2^{2p^\ast+1} - (k_1+0.5-b_3)e_2\,\\  + \phi(\mbox{satv}(\hat\theta+\beta),e_1+d_r) \begin{pmatrix} b_5 \cr \log|e_1+d_r| \end{pmatrix}^{\top} \dfrac{\partial\mbox{satv}}{\partial \beta}\varsigma(e_1+d_r)\,.
\ea\]
Some simple but lengthy computations then show that
\[
\left|\dfrac{\partial\tau_2}{\partial e_2}\tilde\phi_{s}(\theta,\tilde\theta,e_1+d_r)\right| \leq \rho_0\gamma_s(|\tilde\theta|)+\rho_1|e_1|^{4p^\ast+6} + \rho_2|e_2|^{4p^\ast+2}
\]
holds for some constants $\rho_i>0$.

Thus, choosing
\beeq{\label{baru}
\bar u = -(k_{31} + k_{32}|e_3|^{4p^\ast+5} + k_{33}|e_3|^{4p^\ast+1})e_3\,,
}
it can be deduced that
\beeq{\label{V3-SEAs}\ba{l}
V_{3} \geq \mbox\{\gamma_{3,1}(V_1),\gamma_{3,\tilde\theta}(V_{\tilde\theta}),\gamma_{32}(V_2)\} \Longrightarrow  \dot V_{3}(e_3) \leq -V_{3}
\ea}
with $k_{31}>g\rho_0  + 1.5$, $k_{32}\geq \rho_1$, $k_{33}\geq\rho_2$ and
\beeq{\ba{l}
\gamma_{3,1}(s) = s\,\\
\gamma_{3,\tilde\theta}(s) = \dfrac{\rho_0^2}{(k_{31}-1.5)^2}\gamma_s\left(\sqrt{{s}/{a_1}}\right)^2\,\\
\gamma_{3,2}(s) = s\,.
\ea}
With the above construction we have
\beeq{\label{SEA-csmc-2}\ba{l}
\gamma_{1,2}\circ\gamma_{2,3}\circ\gamma_{3,1} < \mbox{Id} \,\\
\gamma_{2,3}\circ\gamma_{3,2} < \mbox{Id}\,\\
\gamma_{1,2}\circ\gamma_{2,3}\circ\gamma_{3,\tilde\theta}\circ\gamma_{\tilde\theta,1} < \mbox{Id}\,.
\ea
}
We observe that the resulting closed-loop system can be viewed as a networked system  consisting of 4 ISS subsystems: $e_1$ subsystem (\ref{e1-SEA}), $e_2$-subsystem (\ref{e2-SEAs-2}), $\tilde\theta$-subsystem (\ref{estsys-SEA}) and $e_3$-subsystem (\ref{e3-SEAs}). Moreover, this network is comprised of 5 simple cycles, for which the cyclic small-gain conditions (\ref{csmc}) are verified to be true by  (\ref{SEA-csmc-1}) and (\ref{SEA-csmc-2}). Therefore, according to Theorem \ref{theo-csgt}, the uniform global asymptotic stability for the resulting closed-loop system can be easily summarized as below.
\begin{proposition}
Consider the closed-loop system (\ref{e1-SEA}), (\ref{estsys-SEA}),(\ref{e2-SEAs-2}), (\ref{e3-SEAs}), (\ref{u-MEAs}), and (\ref{baru}).
Then the zero equilibrium point of the resulting closed-loop system is uniformly globally asymptotically stable. $\square$
\end{proposition}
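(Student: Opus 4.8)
The plan is to invoke Theorem~\ref{theo-csgt} directly, once we have verified that the four subsystems are each uniformly ISS and that every simple cycle of gains satisfies the cyclic small-gain condition~(\ref{csmc}). First I would identify the interconnection structure of the closed loop: the $e_1$-subsystem (\ref{e1-SEA}) is driven by $e_2$; the $\tilde\theta$-subsystem (\ref{estsys-SEA}) is driven by $e_1$ (equivalently $V_1$); the $e_2$-subsystem (\ref{e2-SEAs-2}) is driven by $e_1,\tilde\theta,e_3$; and the $e_3$-subsystem (\ref{e3-SEAs}) is driven by $e_1,\tilde\theta,e_2$. Each subsystem's uniform ISS property has essentially been established already in Steps~1--3: $V_1=|e_1|^2$ with gain $\gamma_{1,2}$ from~(\ref{gamma-12}); $V_{\tilde\theta}$ from Lemma~\ref{lemma-SEA} with gain $\gamma_{\tilde\theta,1}$; $V_2=|e_2|^2$ with gains $\gamma_{2,1},\gamma_{2,\tilde\theta},\gamma_{2,3}$ from~(\ref{V2-SEAs}); and $V_3=|e_3|^2$ with gains $\gamma_{3,1},\gamma_{3,\tilde\theta},\gamma_{3,2}$ from~(\ref{V3-SEAs}), all obtained under the stated gain conditions $k_1>1$, $k_{dz}>k_{dz}^\ast$, $k_{21}>g\delta_2+2.5$, $k_{22}\ge\delta_1 l_\gamma$, $k_{31}>g\rho_0+1.5$, $k_{32}\ge\rho_1$, $k_{33}\ge\rho_2$. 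So the first part of the proof is simply to collect these implications and note that they match the hypotheses of Theorem~\ref{theo-csgt} with $N=4$.

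Next I would enumerate all simple cycles in the associated gain digraph. The nodes are $\{1,2,3,\tilde\theta\}$; the edges are $2\!\to\!1$, $1\!\to\!\tilde\theta$, $1\!\to\!2$, $\tilde\theta\!\to\!2$, $3\!\to\!2$, $1\!\to\!3$, $\tilde\theta\!\to\!3$, $2\!\to\!3$. Tracing the cycles, one finds the $2$-cycles $(1,2)$ and $(2,3)$, and the $3$-cycles $(1,2,3)$ through $1\!\to\!3\!\to\!2\!\to\!1$, $(1,\tilde\theta,2)$ through $1\!\to\!\tilde\theta\!\to\!2\!\to\!1$, and the $4$-cycle $(1,\tilde\theta,3,2)$ through $1\!\to\!\tilde\theta\!\to\!3\!\to\!2\!\to\!1$ — five simple cycles in total, as the text already anticipates. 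The cyclic small-gain condition~(\ref{csmc}) for each of these is precisely the list of inequalities compiled in~(\ref{SEA-csmc-1}) and~(\ref{SEA-csmc-2}): $\gamma_{1,2}\circ\gamma_{2,1}<\mathrm{Id}$, $\gamma_{2,3}\circ\gamma_{3,2}<\mathrm{Id}$, $\gamma_{1,2}\circ\gamma_{2,3}\circ\gamma_{3,1}<\mathrm{Id}$, $\gamma_{1,2}\circ\gamma_{2,\tilde\theta}\circ\gamma_{\tilde\theta,1}<\mathrm{Id}$, and $\gamma_{1,2}\circ\gamma_{2,3}\circ\gamma_{3,\tilde\theta}\circ\gamma_{\tilde\theta,1}<\mathrm{Id}$. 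Since composition respects the ordering of class-$\mathcal{K}$ functions, each such composition being $<\mathrm{Id}$ can be checked term by term; for the linear gains ($\gamma_{2,1}=\mathrm{Id}$, $\gamma_{1,2}(s)=s/k_1^2$, $\gamma_{2,3}(s)=0.5s$, $\gamma_{3,1}=\gamma_{3,2}=\mathrm{Id}$) the products $1/k_1^2<1$ and $1/(2k_1^2)<1$ are immediate from $k_1>1$; for the cycles through $\tilde\theta$ one uses the bound $\gamma_s(\sqrt{\gamma_{\tilde\theta,1}(s)/a_1})^2\le g^2 s$ together with the coefficient conditions on $k_{21}$ and $k_{31}$ to dominate the composed gain by $\mathrm{Id}$.

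Having verified the hypotheses, the conclusion follows: Theorem~\ref{theo-csgt} yields that the full closed-loop system in coordinates $(e_1,e_2,e_3,\tilde\theta)$ is uniformly ISS with respect to its state and (absent) input, hence its zero equilibrium is uniformly globally asymptotically stable, which is the assertion of the Proposition. Strictly speaking one should also observe that the original physical state $(x_1,x_2,x_3,\hat\theta)$ is recovered from the error state through the smooth invertible change of coordinates $e_1=x_1-d_r$, $e_2=x_2-\tau_2(e_1,t)$, $e_3=x_3-\tau_3(e_1,e_2,\hat\theta,t)$, $\tilde\theta=\hat\theta-\theta+\beta$, so UGAS of the error origin transfers to the closed loop as stated. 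The main — and essentially only — obstacle in writing this out is bookkeeping: making sure the cycle enumeration is exhaustive (no cycle overlooked) and that the pre-established gain inequalities cover exactly those five cycles; the analytic content has already been discharged in Steps~1--3, so the Proposition is genuinely a corollary of Theorem~\ref{theo-csgt} rather than a result requiring fresh estimates.
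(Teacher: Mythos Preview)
Your proposal is correct and matches the paper's own argument essentially verbatim: the paper likewise observes that the closed loop is a network of four uniform ISS subsystems, identifies the same five simple cycles, notes that (\ref{SEA-csmc-1}) and (\ref{SEA-csmc-2}) verify the cyclic small-gain condition, and invokes Theorem~\ref{theo-csgt}. Your additional remarks on the coordinate change and the explicit cycle enumeration are sound bookkeeping but not required beyond what the paper already records.
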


To verify the validity of the proposed controller, the simulation is performed with unknown parameters $\theta=\col(0.2,0.4)$ and design parameters $k_{21}=2$, $k_{21}=5$, $k_{22}=10$, $k_{31}=50$, $k_{32}=100$, $k_{33}=100$. As seen from Figures \ref{fig_SEA-e1}, \ref{fig_SEA_theta1} and \ref{fig_SEA_theta22}, the resulting trajectories of the tracking error $e_1(t)$ and the parameter estimation errors $\tilde\theta=\col(\tilde\theta_1(t),\tilde\theta_2(t))$ asymptotically converge to zero.
\begin{figure}[thpb]
\begin{center}
\centering\includegraphics[height=50mm,width=70mm]{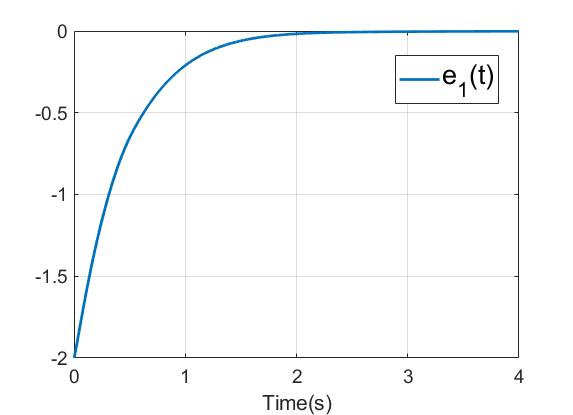} \caption{Trajectory of the tracking error $e_1(t)$}
\label{fig_SEA-e1}
\end{center}
\end{figure}

\begin{figure}[thpb]
\begin{center}
\centering\includegraphics[height=50mm,width=70mm]{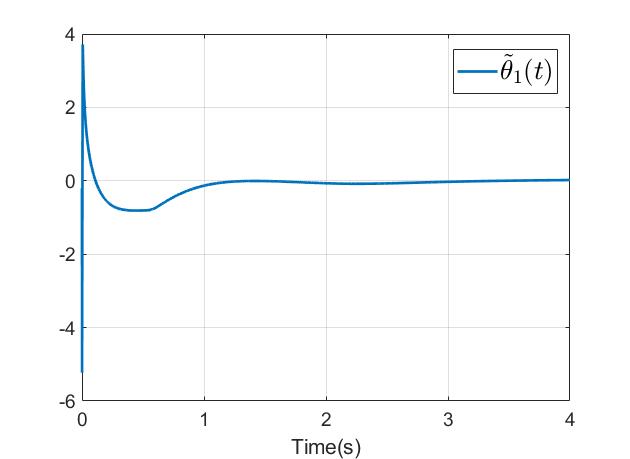} \caption{Trajectory of the parameter estimation error $\tilde\theta_1(t)$}
\label{fig_SEA_theta1}
\end{center}
\end{figure}

\begin{figure}[thpb]
\begin{center}
\centering\includegraphics[height=50mm,width=70mm]{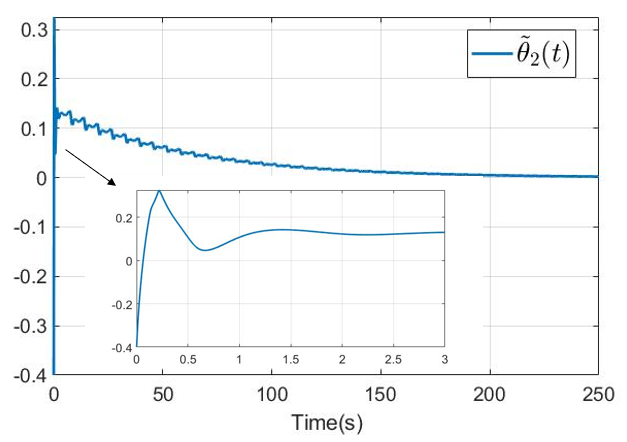} \caption{Trajectory of the parameter estimation error $\tilde\theta_2(t)$}
\label{fig_SEA_theta22}
\end{center}
\end{figure}

\section{Conclusions}
\label{sec-con}
This paper investigates the robust I\&I adaptive tracking problem for a class of nonlinearly parameterized systems from the perspective of ISS. Compared to the standard I\&I adaptive method, a saturation function and a deadzone function are introduced in such a way that an interconnection of an ISS estimation error system and an ISS tracking error subsystem is derived under some mild assumptions. According to an ISS small-gain condition, the desired uniform global asymptotic stability of the resulting interconnected ``error" system can be achieved and a sum-type strict Lyapunov function can be explicitly constructed. Taking advantage of this ISS-based design framework, it is shown that the corresponding robustness with respect to the input perturbation can be rendered to be ISS. To remove the need of solving the immersion manifold shaping PDE, a new filter-based approach is proposed, which preserves the ISS-based design framework. In terms of future works, it is worth considering its applicability to deal with global adaptive nonlinear output regulation problem \cite{WangKellett2019,Wang&Chen&Xu}. Another interesting topic is to relax the PE condition \cite{EfimovBarabanov&Ortega(cdc18)}.


\appendix

\section{Proof of Lemma \ref{lemma-varphi}}
\label{app-varphi}

\vspace{-1.0em}
{
Observe that the function $\varphi(\theta',x_r,e+x_r)$ is continuously differential with respect to $\theta'\in\mathbb{R}^q$ by Assumption \ref{ass-1}, and $\varphi(\theta',x_r,x_r)=0$ for all $\theta'\in\mathbb{R}^q$ and $x_r\in\mathbb{B}_{r_1}^n$, with $r_1$ given in Assumption \ref{ass-2}.

Let $\varphi_i(\cdot)$ denote the $i$-th entry of $\varphi(\cdot)$ and define $\hat{\varphi}_i(j):=\varphi_i(\theta^\dag+j(\theta^\ddag-\theta^\dag),x_r,e+x_r)$ for $j\in\mathbb{R}$. It is clear that $\hat{\varphi}_i(0)=\varphi_i(\theta^\dag,x_r,e+x_r)$ and $\hat{\varphi}_i(1)=\varphi_i(\theta^\ddag,x_r,e+x_r)$.
Thus, applying the Mean Value Theorem yields that given any $\theta^\dag,\theta^\ddag\in\mathbb{B}^q_{l_s+0.5\epsilon_s}$, there exists $j_i'\in [0,1]$ such that the equality
\[\ba{l}
\varphi_i(\theta_1,x_r,e+x_r)-\varphi_i(\theta_2,x_r,e+x_r) = \hat\varphi_i(0)-\hat\varphi_i(1)\,\\
 =-\dst\frac{\partial \hat\varphi_i(j_i^\prime)}{\partial j_i^\prime}= \dst\frac{\partial \varphi_i}{\partial \theta_i^\prime}(\theta_i^\prime,x_r,e+x_r)(\theta^\dag-\theta^\ddag)
\ea\]

\vspace{-1.6em}\noindent
holds with $\theta_i^\prime := \theta^\dag+j_i'(\theta^\ddag-\theta^\dag)\in\mathbb{B}^q_{l_s+0.5\epsilon_s}$,  where the function $\dst\frac{\partial \varphi_i}{\partial \theta'}(\theta_i',x_r,e+x_r)$ is continuous in $e\in\mathbb{R}^n,\theta_i'\in\mathbb{R}^q$ and $x_r\in\mathbb{B}_{r_1}^n$. Since $\varphi_i(\theta_i^\prime,x_r,x_r)\equiv0$ for all $\theta_i^\prime\in\mathbb{R}^q$ by definition, we have
\[
\dst\frac{\partial \varphi_i}{\partial \theta'}(\theta_i',x_r,x_r) = 0 \quad \mbox{for all $\theta_i'\in\mathbb{R}^q$ and $x_r\in\mathbb{B}_{r_1}^n$}\,.
\]

\vspace{-1.6em}\noindent
With this in mind, let
\[
\kappa_2(s) = \displaystyle\sum_{i=1}^q\sup_{(\theta_i',x_r,e)\in\mathbb{B}^q_{l_s+0.5\epsilon_s}\times \mathbb{B}_{r_1}^n\times\mathbb{B}_s^n}\left|\dst\frac{\partial \varphi_i}{\partial \theta_i'}(\theta_i',x_r,e+x_r)\right|
\]
for $s\in\mathbb{R}_+$. Clearly, $\kappa_2(s)$ is a continuous non-decreasing function, and $\kappa_2(0)=0$.
This implies $\kappa_2\in\mathcal{K}$. Hence,
we have
\[
|\varphi(\theta^\dag,x_r,e+x_r)-\varphi(\theta^\ddag,x_r,e+x_r)| \leq  \kappa_2(|e|)|\theta^\dag-\theta^\ddag|\,.
\]
This completes the proof. $\blacksquare$
}

\section{Proof of Lemma \ref{lemma-1}}
\label{app-1}

\vspace{-1.0em}
It is observed that for all $\theta\in\Theta$, if $|\theta'|\leq l_s$, then
\[\ba{rcl}
&&(\theta'-\theta)^{\top}H(\theta,\theta'-\theta,x_r) \,\\
&\leq& -(\theta'-\theta)^{\top}M_1(x_r)(\theta'-\theta) - k_{dz}(\theta'-\theta)^{\top} \mbox{dzv}(\theta')\,\\
&\leq& -(\theta'-\theta)^{\top}M_1(x_r)(\theta'-\theta)\,,
\ea\]
where the first inequality is obtained by using (\ref{ineq-ND}), and the second is obtained by using the fact that $(\theta'-\theta)^{\top} \mbox{dzv}(\theta')\geq 0$ for all $\theta\in\Theta$ and $\theta'\in\mathbb{R}^q$.

If $|\theta'|> l_s$, simple calculations show that there exists  an $r_3>0$ such that
\[
r_3(\theta'-\theta)^{\top} \mbox{dzv}(\theta')\geq |\theta'-\theta|^2\,.
\]
Due to the presence of the saturation function $\mbox{satv}(\cdot)$ in the definition of $\tilde\phi_s$ in (\ref{def-tphi})
, there exists an $r_4>0$ such that
\[
|\varsigma(x_r)\tilde\phi_s(\theta,\theta'-\theta,x_r)| \leq r_4
\]
for all $\theta\in\Theta$ and $x_r\in\mathbb{B}_{r_1}^n$, with $r_1$ given in Assumption \ref{ass-2}.
Thus, for all $|\theta'|> l_s$, we have
\[\ba{rcl}
&&(\theta'-\theta)^{\top}H(\theta,\theta'-\theta,x_r)\,\\
&\leq& |\theta'-\theta||\varsigma(x_r)\tilde\phi_s(\theta,\theta'-\theta,x_r)|- k_{dz} (\theta'-\theta)^{\top}\mbox{dzv}(\theta')\,\\
&\leq& r_4|\theta'-\theta| - \dst\frac{k_{dz}}{r_3}|\theta'-\theta|^2\,\\
 &\leq& -\dst\frac{(k_{dz}-k_{dz}^\ast)}{r_3}|\theta'-\theta|^2 - (\theta'-\theta)^{\top}M_1(x_r)(\theta'-\theta)\,
\ea\]

\vspace{-1.6em}\noindent
holds with
$
k_{dz}^\ast :=\dst\frac{r_3r_4}{2 l_\theta  +1} + r_3\max_{x_r\in\mathbb{B}_{r_1}^n}\|M_1(x_r)\|\,.
$

\vspace{-1.0em}
Therefore, by choosing $k_{dz}\geq k_{dz}^\ast$, we have
\[
(\theta'-\theta)^{\top}H(\theta,\theta'-\theta,x_r) \leq - (\theta'-\theta)^{\top}M_1(x_r)(\theta'-\theta)\,,
\]
which, by using $\tilde\theta$ to replace $\theta'-\theta$, yields that for all $\tilde\theta\in\mathbb{R}^q$,
\beeq{\label{A-1}
\tilde\theta^{\top}H(\theta,\tilde\theta,x_r) \leq - \tilde\theta^{\top}M_1(x_r)\tilde\theta \leq 0\,.
}

\vspace{-1.6em}\noindent
With this in mind, let $\Phi(t,t_0,\tilde\theta)$ denote the solution of system (\ref{auxi-tilde-theta-2}) that starts at $\tilde\theta$. It is clear that
\beeq{
{\partial \Phi(\tau,t,\tilde\theta)}/{\partial \tau} = H(\theta,\Phi(\tau,t,\tilde\theta),x_r(\tau))\,.
}
By (\ref{A-1}), it is observed that
\[\ba{rcl}
{\partial |\Phi(\tau,t,\tilde\theta)|^2}/{\partial \tau}
 &\leq& -2\Phi(\tau,t,\tilde\theta)^{\top}M_1(x_r(\tau))\Phi(\tau,t,\tilde\theta)\,\\
 &\leq& 0\,.
\ea\]

\vspace{-1.6em}\noindent
This indicates that for any $\tau\geq t$, $|\Phi(\tau,t,\tilde\theta)|^2 \leq |\Phi(t,t,\tilde\theta)|^2$, and
\beeq{\label{bound-W}
|\tilde\theta(t)|^2 \leq \tilde\theta(t_0)^{\top}\mbox{exp}\left(-\int_{t_0}^{t}  \mathcal{H}(M_1(x_r(\tau))) d\tau\right)\tilde\theta(t_0)
}

\vspace{-1.6em}\noindent
where $\mathcal{H}(M_1(x_r(\tau)))=M_1(x_r(\tau))^{\top}+M_1(x_r(\tau))$, with $t_0$ being the initial time.

\vspace{-1.0em}
Then, with $\delta>0$ we choose the Lyapunov function
\beeq{\label{V-1}
V_{est}(t,\tilde\theta) = \int_{t}^{t+\delta} |\Phi(\tau,t,\tilde\theta)|^2 d\tau\,.
}

\vspace{-1.6em}\noindent
It is immediate to see that
\beeq{\label{low-V-1}
 V_{est}(t,\tilde\theta) \leq \int_{t}^{t+\delta} |\Phi(t,t,\tilde\theta)|^2 d\tau = \delta |\tilde\theta|^2\,.
}
On the other hand, recalling  (\ref{Ineq-UP}) and the definition of function $\mbox{dz}(\cdot)$ in (\ref{def-dz}),  there exist constants $c_1,c_2>0$ such that
\[\ba{l}
|\Phi(\tau,t,\tilde\theta)||\varsigma(x_r(\tau))||\tilde\phi_s(\theta,\Phi(\tau,t,\tilde\theta),x_r(\tau))| \leq c_1|\Phi(\tau,t,\tilde\theta)|^2\,\\
|k_{dz}\Phi(\tau,t,\tilde\theta)\mbox{dzv}(\Phi(\tau,t,\tilde\theta)+\theta)| \leq c_2|\Phi(\tau,t,\tilde\theta)|^2\,.
\ea\]

\vspace{-1.8em}
\noindent
This in turn indicates that
\vspace{-1.5em}
\begin{dmath*}
{\partial |\Phi(\tau,t,\tilde\theta)|^2}/{\partial \tau}
 = 2\Phi(\tau,t,\tilde\theta)^{\top}H(\theta,\Phi(\tau,t,\tilde\theta),x_r(\tau))\,\\
 \geq -2|\Phi(\tau,t,\tilde\theta)||\varsigma(x_r(\tau))||\tilde\phi_s(\theta,\Phi(\tau,t,\tilde\theta),x_r(\tau))| \\ \qquad - k_{dz}|\Phi(\tau,t,\tilde\theta)\mbox{dzv}(\Phi(\tau,t,\tilde\theta)+\theta)|\,\\
\geq -2(c_1+c_2) |\Phi(\tau,t,\tilde\theta)|^2\,.
\end{dmath*}

\vspace{-1.5em}
By standard arguments, it then follows that
$
|\Phi(\tau,t,\tilde\theta)|^2 \geq |\tilde\theta|^2e^{-2(c_1+c_2)(\tau-t)}\,,
$
which leads to
\beeq{\label{up-V-1}
V_{est}(t,\tilde\theta) \geq \frac{1-e^{-2(c_1+c_2)\delta}}{2(c_1+c_2)}|\tilde\theta|^2\,.
}

\vspace{-1.5em}
\noindent This, together with (\ref{low-V-1}) proves (\ref{Vest-a}) with $a_2=\delta$ and $a_1=\frac{1-e^{-2(c_1+c_2)\delta}}{2(c_1+c_2)}$.

\vspace{-1.0em}
We now proceed to compute the time derivative of $V_{est}$ as
\[\ba{l}
\dot V_{est} = \dst\frac{\partial V_{est}(t,\tilde\theta)}{\partial t} + \dst\frac{\partial V_{est}(t,\tilde\theta)}{\partial \tilde\theta}H(\theta,\tilde\theta,x_r(t))\,\\
 = |\Phi(t+\delta,t,\tilde\theta(t))|^2 -|\tilde\theta(t)|^2 \\  + 2 \dst\int_{t}^{t+\delta} \Phi(\tau,t,\tilde\theta)^{\top}  \left[\frac{\partial\Phi(\tau,t,\tilde\theta)}{\partial t}+\frac{\partial \Phi(\tau,t,\tilde\theta)}{\partial \tilde\theta}H(\theta,\tilde\theta,x_r)\right]d\tau\,\\
 = |\Phi(t+\delta,t,\tilde\theta(t))|^2 -|\tilde\theta(t)|^2
\ea\]
where to obtain the last equation we have used the fact that
\[
\frac{\partial\Phi(\tau,t,\tilde\theta)}{\partial t}+\frac{\partial \Phi(\tau,t,\tilde\theta)}{\partial \tilde\theta}H(\theta,\tilde\theta,x_r(t))\equiv 0\,.
\]

\vspace{-1.8em}
\noindent
Furthermore, by recalling (\ref{bound-W}), the equation of $\dot V_{est}$ can be elaborated by
\[\ba{rcl}
\dot V_{est} &\leq& \tilde\theta^{\top}\mbox{exp}\left(-\dst\int_{t}^{t+\delta}  \mathcal{H}(M_1(x_r(\tau))) d\tau\right)\tilde\theta -|\tilde\theta|^2 \,\\
&\leq& -(1-e^{-2\mu })|\tilde\theta(t)|^2\,\\
\ea\]
which proves (\ref{Vest-b})  with $a_3=(1-e^{-2\mu })$.

\vspace{-1.0em}
To show (\ref{Vest-c}), it is observed from (\ref{def-H}) that
\[
\frac{\partial H(\theta,\tilde\theta,x_r)}{\partial \tilde\theta} =-\varsigma(x_r)\frac{\partial\tilde\phi_s}{\partial\tilde\theta}(\theta,\tilde\theta,x_r) - k_{dz}\frac{\partial\mbox{dzv}}{\partial\tilde\theta}(\tilde\theta+\theta)\,.
\]

\vspace{-1.5em}
\noindent
Recalling (\ref{def-dz}), we can always find a constant $h>0$ such that
$
\left|{\partial H(\theta,\tilde\theta,x_r)}/{\partial \tilde\theta}\right|\leq h\,.
$
It is noted that
\beeq{\label{equ-sen}
\frac{\partial^2 \Phi(\tau,t,\tilde\theta)}{\partial \tilde\theta\partial\tau} = \frac{\partial H(\theta,\Phi(\tau,t,\tilde\theta),x_r(\tau))}{\partial \tilde\theta}\,
}

\vspace{-1.5em}
\noindent
which implies
\[
\left|{\partial \Phi(\tau,t,\tilde\theta)}/{\partial \tilde\theta}\right| \leq \left|{\partial \Phi(t,t,\tilde\theta)}/{\partial \tilde\theta}\right|e^{h(\tau-t)} = e^{h(\tau-t)}\,.
\]

\vspace{-1.5em}
\noindent
We also observe that
\[\ba{rl}
|\Phi(\tau,t,\tilde\theta)|^2 &\leq {V_{est}(\tau)}/{a_1}
\leq e^{-\frac{a_3}{a_2}(\tau-t)} {V_{est}(t)}/{a_1}\,\\
&\leq ({a_2}/{a_1})e^{-\frac{a_3}{a_2}(\tau-t)}|\tilde\theta|^2\,.
\ea\]

Therefore,
\[\ba{rcl}
\left|{\partial V_{est}}/{\partial \tilde\theta}\right| &=& \left|\dst\int_{t}^{t+\delta} 2\Phi^{\top}(\tau,t,\tilde\theta){\partial \Phi(\tau,t,\tilde\theta)}/{\partial \tilde\theta}\right|\,\\
&\leq& \dst\int_{t}^{t+\delta} 2|\Phi(\tau,t,\tilde\theta)|\cdot\left|{\partial \Phi(\tau,t,\tilde\theta)}/{\partial \tilde\theta}\right|d\tau\,\\
&\leq& \dst\int_{t}^{t+\delta} 2 \sqrt{{a_2}/{a_1}} e^{(h-\frac{a_3}{2a_2})(\tau-t)} d\tau\, |\tilde\theta|\,\\
&=& 2 \dst\sqrt{\frac{a_2}{a_1}} \frac{2a_2}{2ha_2-a_3}\left(e^{\frac{2ha_2-a_3}{2a_2}\delta}-1\right) |\tilde\theta| \,\\
&:=& a_4|\tilde\theta|\,.
\ea\]
The proof is thus completed. $\blacksquare$

\section{Proof of Theorem \ref{theo-general}}
\label{app-sec-proof-Th2}

\vspace{-1.0em}
Fix $\tau_{err}>1$ and let $\tau_{err}'$ and $\tau_{est}$  satisfy
\[
\tau_{err} > \tau_{err}' > \tau_{est} >1\,.
\]
Define $\gamma_{\tilde\theta,e}(s):=\sigma_{\tilde\theta,e}\circ\alpha_1^{-1}(s)$ for all $s\geq0$. Then, using (\ref{ineq-V-a}), we can rewrite (\ref{ineq-V-2-b})  as
\[
V_{est} \geq \gamma_{\tilde\theta,e}(V_{err}) \,\Longrightarrow\, \dot V_{est}\leq -\frac{a_3(\tau_{est}-1)}{\tau_{est}}|\tilde\theta|^2\,.
\]

\vspace{-1.8em}
\noindent
Since $\gamma_s(s)\leq s$, it can be seen that
\[
\tau_{err}'\gamma_s\left({s}/{\tau_{err}'}\right)\leq s\,,\quad\forall s\in\mathbb{R}_+\,.
\]

\vspace{-1.8em}
\noindent
Bearing in mind this fact and (\ref{alpha-3}), (\ref{V-err-1}) can be bounded from above as
\[\ba{l}
\dot V_{err} \leq -\tau_{err}\gamma_s\circ a^\ast l_{\gamma}\kappa_2\circ\alpha_1^{-1}\circ\alpha_2(|e|) \alpha_4(|e|)\kappa_1(|e|)\,\\
\qquad - \alpha_4(|e|)\kappa_1(|e|)\gamma_s(|\tilde\theta|)\,\\
\leq -(\tau_{err}-\tau_{err}')\gamma_s\circ a^\ast l_{\gamma}\kappa_2\circ\alpha_1^{-1}\circ\alpha_2(|e|) \alpha_4(|e|)\kappa_1(|e|)\,\\
\qquad -\tau_{err}'\gamma_s\circ a^\ast l_{\gamma}\kappa_2\circ\alpha_1^{-1}\circ\alpha_2(|e|) \alpha_4(|e|)\kappa_1(|e|)\,\\
\qquad + \tau_{err}'\gamma_s(\frac{|\tilde\theta|}{\tau_{err}'}) \alpha_4(|e|)\kappa_1(|e|)\,\\
\leq -\alpha_{err}(|e|)\,\\
-\tau_{err}'\left[\gamma_s\circ a^\ast l_{\gamma}\kappa_2\circ\alpha_1^{-1}\circ\alpha_2(|e|) - \gamma_s(\frac{|\tilde\theta|}{\tau_{err}'})\right]\alpha_4(|e|)\kappa_1(|e|)\,
\ea\]

\vspace{-1.8em}
\noindent
with $\alpha_{err}(s)=(\tau_{err}-\tau_{err}')\gamma_s\circ a^\ast l_{\gamma}\kappa_2\circ\alpha_1^{-1}\circ\alpha_2(s) \alpha_4(s)\kappa_1(s)$.
This, together with the right side of (\ref{ineq-V-a}), yields
\[\ba{l}
\tau_{err}'a^\ast l_{\gamma}\kappa_2\circ\alpha_1^{-1}(V_{err}) \geq |\tilde\theta| \,\Longrightarrow\,
\dot V_{err} \leq  -\alpha_{err}(|e|)\,.
\ea\]

\vspace{-1.8em}
\noindent
Recalling (\ref{Vest-a}), (\ref{sigma-21}), and (\ref{a-ast}), we further have
\[\ba{l}
V_{err} \geq \gamma_{e,\tilde\theta}(V_{est}) \,\Longrightarrow\,
\dot V_{err} \leq  -\alpha_{err}(|e|)
\ea\]
with $\gamma_{e,\tilde\theta}(s):=\alpha_1\circ \sigma_{\tilde\theta,e}^{\ominus}(\frac{\tau_{est}}{\tau_{err}'}s)$ for all $s\geq0$.

\vspace{-1.0em}
Note that $\gamma_{e,\tilde\theta}\in\mathcal{K}$ and $\gamma_{\tilde\theta,e}\in\mathcal{K}$, and
\[
\gamma_{e,\tilde\theta}\circ\gamma_{\tilde\theta,e} \leq ({\tau_{est}}/{\tau_{err}'}) \mbox{Id} < \mbox{Id}\,.
\]

\vspace{-1.5em}
\noindent
Therefore, using standard ISS small-gain theorem \cite{Jiang&Teel(1994)} or Theorem \ref{theo-csgt}, it is immediate to conclude Theorem \ref{theo-general}. $\blacksquare$

\section{Proof of Corollary \ref{Coro1}}
\label{app-sec-proof-Coro1}

Let $\tau_i>2$, $i=1,2$ be such that $\tau_{err}\geq\tau_{1}\tau_2$. Then with (\ref{alpha-3}), (\ref{V-err-1}) can be rewritten as
\[\ba{l}
\dot V_{err} \leq -\tau_{err}\gamma_s\circ a^\ast l_{\gamma}\kappa_2\circ\alpha_1^{-1}\circ\alpha_2(|e|) \alpha_4(|e|)\kappa_1(|e|)\,\\
\qquad - \alpha_4(|e|)\kappa_1(|e|)\gamma_s(|\tilde\theta|)\,\\
\leq -\tau_1\gamma_s\circ \tau_2 a^\ast l_{\gamma}\kappa_2\circ\alpha_1^{-1}\circ\alpha_2(|e|) \alpha_4(|e|)\kappa_1(|e|)\,\\
\qquad + \gamma_s(|\tilde\theta|) \alpha_4(|e|)\kappa_1(|e|)\,\\
\leq - (\rho_{err}(|e|)- \sigma_{err}(|\tilde\theta|))\zeta_{err}(|e|)
\ea\]
where for convenience we have defined
\[\ba{l}
\rho_{err}(s) := \tau_1\gamma_s\circ a^\ast \tau_2l_{\gamma}\kappa_2\circ\alpha_1^{-1}\circ\alpha_2(s)\,,\quad\\
\zeta_{err}(s) := \alpha_4(s)\cdot\kappa_1(s)\,,\quad \sigma_{err}(s) := \gamma_s(s)\,
\ea
\]
for all $s\geq0$, which yields
\[
\rho_{err}\circ\alpha_2^{-1}\circ\alpha_1(s) = \tau_1\gamma_s\circ a^\ast \tau_2l_{\gamma}\kappa_2(s)\,.
\]
On the other hand, let
\[\ba{c}
\underline{\alpha}_{est}(s) := a_1 s^2\,,\quad \bar{\alpha}_{est}(s) := a_2 s^2,\\
\zeta_{est}(s) := s\,,\quad
\rho_{est}(s) := a_3s\,,\quad
\sigma_{est}(s) := a_4l_{\gamma} \kappa_2(s)\,
\ea\]
which implies
\[
\underline{\alpha}_{est}^{-1}\circ\bar{\alpha}_{est}\circ\rho_{est}^{-1}\circ\tau_2\sigma_{est}(s) = a^\ast \tau_2l_{\gamma}\kappa_2(s)\,.
\]
It then can be easily deduced from the proof of Lemma \ref{lemma-non-ex} that the ISS Lyapunov function $V_{est}(t,\tilde\theta)$  for (\ref{tilde-theta-2}) fulfills
\beeq{\label{B-V_id}
\dot {V}_{est} \leq - (\rho_{est}(|\tilde\theta|)- \sigma_{est}(|e|))\zeta_{est}(|\tilde\theta|)\,.
}
Thus we choose the sum-type Lyapunov function $V_{cl}(t,e,\tilde\theta)$ as in (\ref{V-cl-1}) with
\[\ba{l}
\lambda_{err}(s):=\sigma_{est}\circ\alpha_1^{-1}(s)\cdot \zeta_{est}\circ\rho_{est}^{-1}\circ\tau_2\sigma_{est}\circ\alpha_1^{-1}(s)\,\\
\lambda_{est}(s):=\sigma_{err}\circ\underline{\alpha}_{est}^{-1}(s)\cdot \zeta_{err}\circ\rho_{err}^{\ominus}\circ\tau_1\sigma_{err}\circ\underline{\alpha}_{est}^{-1}(s)\,.
\ea\]
Computing the derivative of the Lyapunov function $V_{cl}$ yields
\[\ba{l}
\dot V_{cl} \leq -\dfrac{\tau_1-1}{\tau_1}\lambda_{err}(V_{err})\zeta_{err}(|e|)\rho_{err}(|e|) \,\\ -\dfrac{\tau_2-1}{\tau_2}\lambda_{est}(V_{est})\zeta_{est}(|\tilde\theta|)\rho_{est}(|\tilde\theta|)\,\\
- \left(\dfrac{1}{\tau_1}\rho_{err}(|e|)- \sigma_{err}(|\tilde\theta|)\right)\zeta_{err}(|e|)\lambda_{err}(V_{err})\,\\
- \left(\dfrac{1}{\tau_2}\rho_{est}(|\tilde\theta|)- \sigma_{est}(|e|)\right)\zeta_{est}(|\tilde\theta|)\lambda_{est}(V_{est})\,.
\ea\]
Observe that using the nonlinear scaling technique \cite{Sontag&Teei(1995)} and combining the two cases $\rho_{err}(|e|)\geq \tau_1 \sigma_{err}(|\tilde\theta|)$ and $\rho_{err}(|e|)< \tau_1 \sigma_{err}(|\tilde\theta|)$, yields
\[\ba{l}
- \left(\dfrac{1}{\tau_1}\rho_{err}(|e|)- \sigma_{err}(|\tilde\theta|)\right)\zeta_{err}(|e|)\lambda_{err}(V_{err})\,\\
\leq \sigma_{err}(|\tilde\theta|)\cdot \zeta_{err}\circ\rho_{err}^{\ominus}\circ\tau_1\sigma_{err}(|\tilde\theta|)\cdot\,\\ \qquad \cdot\lambda_{err}\circ\alpha_2\circ\rho_{err}^{\ominus}\circ\tau_1\sigma_{err}(|\tilde\theta|)\,\\
\leq \dfrac{1}{\tau_2}\lambda_{est}(V_{est})\zeta_{est}(|\tilde\theta|)\rho_{est}(|\tilde\theta|)
\ea\]
where the last inequality is obtained by using the inequalities
\[\ba{l}
\sigma_{err}(|\tilde\theta|)\cdot \zeta_{err}\circ\rho_{err}^{\ominus}\circ\tau_1\sigma_{err}(|\tilde\theta|) \leq \lambda_{est}(V_{est})\,\\
\lambda_{err}\circ\alpha_2\circ\rho_{err}^{\ominus}\circ\tau_1\sigma_{err}(|\tilde\theta|) \leq \dfrac{1}{\tau_2}\zeta_{est}(|\tilde\theta|)\rho_{est}(|\tilde\theta|)\,.
\ea\]
Mimicking the above analysis, we can obtain
\[\ba{l}
- \left(\dfrac{1}{\tau_2}\rho_{est}(|\tilde\theta|)- \sigma_{est}(|e|)\right)\zeta_{est}(|\tilde\theta|)\lambda_{est}(V_{est})\,\\
\leq \dfrac{1}{\tau_1}\lambda_{err}(V_{err})\zeta_{err}(|e|)\rho_{err}(|e|)\,.
\ea\]
Therefore, the derivative of $V_{cl}$ can be further elaborated as
\[\ba{l}
\dot V_{cl} \leq -\dfrac{\tau_1-2}{\tau_1}\lambda_{err}(V_{err})\zeta_{err}(|e|)\rho_{err}(|e|) \,\\ -\dfrac{\tau_2-2}{\tau_2}\lambda_{est}(V_{est})\zeta_{est}(|\tilde\theta|)\rho_{est}(|\tilde\theta|)\,\\
\ea\]
which completes the proof. $\blacksquare$

\section{Proof of Theorem \ref{theo-rgeneral}}
\label{app-sec-proof-Theo-rg}

Let $\tau_{est}$ and $\tau_{err}'$ be such that
\vspace{-1.0em}
\[
\tau_{err} > \tau_{err}' > \tau_{est} >1\,.
\vspace{-1.0em}
\]
Along the $\tilde\theta$-subsystem in (\ref{rclsys}) and using (\ref{kappa-34}), we have
\vspace{-1.0em}
\[\ba{rl}
\dst\dot {V}_{est} &\leq -a_3|\tilde\theta|^2 + a_4l_{\gamma} |\tilde\theta|\kappa_2(|e|) \,\\ & \qquad + a_4 |\tilde\theta||\varsigma(e+x_r)|\cdot|g(e+x_r)|\cdot|d(t)|\,\\
&\leq -a_3|\tilde\theta|^2 + a_4l_{\gamma} |\tilde\theta|\kappa_2(|e|) + a_4 |\tilde\theta|\sqrt{\nu(|e|)}|d(t)|\,\\ &\qquad +
a_4 (\kappa_3^\ast+\kappa_4^\ast) |\tilde\theta| |d(t)|\,\\
&\leq -a_3|\tilde\theta|^2 + a_4|\tilde\theta|\bar\kappa_2(|e|)  + a_4 |\tilde\theta|(|d(t)|^2 + \iota|d(t)|)
\vspace{-1.0em}
\ea\]
with $\iota=\kappa_3^\ast+\kappa_4^\ast$.
Similar to the proof of Theorem \ref{theo-general}, it can be seen that the $\tilde\theta$-subsystem is uniformly ISS with respect to state $\tilde\theta$ and inputs $e,d(t)$, and fulfills
\vspace{-1.0em}
\[\ba{l}
V_{est} \geq \max\{\bar\gamma_{\tilde\theta,e}(V_{est}),\sigma_{\tilde\theta,d}(|d(t)|)\} \,\Longrightarrow\, \\
\dot V_{est}\leq -\frac{a_3(\tau_{est}-1)}{2\tau_{est}}|\tilde\theta|^2\,
\vspace{-1.0em}
\ea\]
with
\[\ba{l}
\bar\gamma_{\tilde\theta,e}(s)=a_1(\tau_{est} a^\ast)^2(\bar\kappa_2\circ\alpha_1^{-1}(s))^2\,\\
\sigma_{\tilde\theta,d}(s) = 4a_1 \dfrac{(a^\ast\tau_{est})^2}{(\tau_{est}-1)^2}(s^2 + \iota s)^2\,.
\ea\]

Along the $e$-subsystem in (\ref{rclsys}) and recalling Assumption \ref{ass-ideal-sys}, we have
\vspace{-1.0em}
\[\ba{l}
\dot {V}_{err} \leq -\alpha_3(|e|) -k_d\left|\dst\frac{\partial V_{err}}{\partial e}g(e+x_r)\right|^2 \, \\ \qquad + \left|\dst\frac{\partial V_{err}}{\partial e}\right| \,|\tilde\phi_s(\theta,\tilde\theta,e+x_r)| + \left|\dst\frac{\partial V_{err}}{\partial e}g(e+x_r)\right|\,|d(t)|\,\\
 \leq -\alpha_3(|e|)  + \alpha_4(|e|)\kappa_1(|e|)\gamma_s(|\tilde\theta|) + \dfrac{1}{4k_d} |d(t)|^2\,
 \vspace{-1.0em}
\ea
\]
where  (\ref{r-eta}) and Young's inequality are used to obtain the second inequality.

Again, similar to the proof of Theorem \ref{theo-general}, it can be seen that the $e$-subsystem is uniformly ISS with respect to state $e$ and inputs $\tilde\theta,d(t)$, and fulfills
\vspace{-1.0em}
\[\ba{l}
V_{err} \geq \max\{\bar\gamma_{e,\tilde\theta}(V_{est}),\sigma_{e,d}(|d(t)|\} \,\Longrightarrow\,\\
\dot V_{err} \leq  -\bar\alpha_{err}(|e|)
\vspace{-1.0em}
\ea\]
with
\vspace{-1.0em}
\[\ba{l}
\bar\alpha_{err}(s) := \frac{\tau_{err}-\tau_{err}'}{2}\gamma_s\circ a^\ast \bar\kappa_2\circ\alpha_1^{-1}\circ\alpha_2(s) \alpha_4(s)\kappa_1(s)\,\\
\bar\gamma_{e,\tilde\theta}(s):=\bar\gamma_{\tilde\theta,e}^{\ominus}(\frac{\tau_{est}}{\tau_{err}'}s)\,\\
\sigma_{e,d}(s) := \alpha_2\circ\bar\alpha_{err}^{-1}(\frac{1}{4k_d} s^2)\,.
\vspace{-1.0em}
\ea\]

\vspace{-1.6em}
Therefore, according to Theorem \ref{theo-csgt},  system (\ref{rclsys}) is uniformly ISS with respect to the state $(e,\tilde\theta)$ and input $d(t)$. $\blacksquare$

\section{Proof of Theorem \ref{theo-f}}
\label{app-proof-theo-f}

With (\ref{alpha-3-f}), mimicking the proof of Theorem \ref{theo-general}, we can conclude that both the $\tilde\theta$ and $e$ subsystems are uniformly ISS. More explicitly, along (\ref{clsys-f}), by letting $V_{\epsilon}=|\epsilon_e|^2$, we have
\[\ba{l}
V_{err} \geq \check\gamma_{e,\tilde\theta}(V_{est}) \,\Longrightarrow\,
\dot V_{err} \leq  -\check\alpha_{err}(|e|)\,\\
V_{est} \geq \max\{\check\gamma_{\tilde\theta,e}(V_{err}),\check\gamma_{\tilde\theta,\epsilon}(V_{\epsilon})\} \,\Longrightarrow\,
\dot V_{err} \leq  -\check a_{est}|\tilde\theta|^2
\ea\]
for some appropriately defined function $\check\alpha_{err}\in\mathcal{K}_{\infty}$ and positive constant $\check a_{est}>0$.

%
%

Then we compute the derivative of $V_{\epsilon}$ along (\ref{clsys-f}) as
\[\ba{l}
\dot V_{\epsilon} \leq -2\epsilon_e^\top K(\epsilon_e) + 2 |\epsilon_e|(\varrho_1(|e|) + \varrho_1^\ast)\gamma_s(|\tilde\theta|)\,\\
\leq -2\epsilon_e^\top K(\epsilon_e)+ |\epsilon_e|\varrho_1(|e|)^2 + |\epsilon_e|(\varrho_1^\ast\gamma_s(|\tilde\theta|) +\frac{1}{4}\gamma_s(|\tilde\theta|)^2) \,\\
\leq -2\epsilon_e^\top K(\epsilon_e) + |\epsilon_e|\pi_{\epsilon,e}(V_{err}) + |\epsilon_e|\pi_{\epsilon,\tilde\theta}(V_{est}) \,\\
\leq -2k_{\epsilon}|\epsilon_e|^2 -  2|\epsilon_e|\left[\pi_{\epsilon,e}\circ\check\gamma_{e,\tilde\theta}\circ\check\gamma_{\tilde\theta,\epsilon}(V_\epsilon) - \frac{1}{2}\pi_{\epsilon,e}(V_{err}) \right]\\
\qquad -  2|\epsilon_e|\left[\pi_{\epsilon,\tilde\theta}\circ\check\gamma_{\tilde\theta,\epsilon}(V_\epsilon) - \frac{1}{2}\pi_{\epsilon,\tilde\theta}(V_{est}) \right]\\
\leq -2k_{\epsilon}|\epsilon_e|^2 -  2|\epsilon_e|\left[\pi_{\epsilon,e}\circ\check\gamma_{e,\tilde\theta}\circ\check\gamma_{\tilde\theta,\epsilon}(V_\epsilon) - \frac{1}{2}\pi_{\epsilon,e}(V_{err}) \right]\\
\qquad -  2|\epsilon_e|\left[\pi_{\epsilon,\tilde\theta}\circ\check\gamma_{\tilde\theta,\epsilon}(V_\epsilon) - \frac{1}{2}\pi_{\epsilon,\tilde\theta}(V_{est}) \right]\,.
\ea\]
Recalling the definition of $\pi_{\epsilon,\tilde\theta}$ in (\ref{pi-theta}), it can be easily deduced that $\frac{1}{2}\pi_{\epsilon,\tilde\theta}(s) \leq \pi_{\epsilon,\tilde\theta}(\mu' s)$
holds for some $0<\mu'<1$.
With this being the case, we have
\[\ba{l}
V_{\epsilon} \geq \max\{\check\gamma_{\epsilon,e}(V_{err}),\check\gamma_{\epsilon,\tilde\theta}(V_{est})\} \,\Longrightarrow\,
\dot V_{err} \leq  -2k_{\epsilon}|\epsilon_e|^2
\ea\]
with
\[\ba{l}
\check\gamma_{\epsilon,e}(s) := \check\gamma_{\tilde\theta,\epsilon}^{-1}\circ\check\gamma_{e,\tilde\theta}^{-1}\circ\pi_{\epsilon,e}^{-1}\circ\frac{1}{2}\pi_{\epsilon,e}(s)\,\\
\check\gamma_{\epsilon,\tilde\theta}(s) := \check\gamma_{\tilde\theta,\epsilon}^{-1}(\mu's)\,.
\ea\]
Observe that the extended system  (\ref{clsys-f}) is comprised of three simple cycles as
\[\ba{l}
e \rightarrow \tilde\theta \rightarrow e\,\\
\tilde\theta \rightarrow \epsilon_e \rightarrow \tilde\theta\,\\
e \rightarrow \tilde\theta \rightarrow \epsilon_e \rightarrow e\,.
\ea
\]
According to Theorem \ref{theo-csgt}, we verify the small-gain condition (\ref{csmc}) for all these cycles as
\[\ba{l}
\check\gamma_{e,\tilde\theta}\circ\check\gamma_{\tilde\theta,e} =\dst\frac{\tau_{est}}{\tau_{err}'} \mbox{Id} < \mbox{Id}\,\\
\check\gamma_{\tilde\theta,\epsilon}\circ\check\gamma_{\epsilon,\tilde\theta} = \mu' \mbox{Id} < \mbox{Id}\,\\
\check\gamma_{e,\tilde\theta}\circ\check\gamma_{\tilde\theta,\epsilon}\circ\check\gamma_{\epsilon,e} = \pi_{\epsilon,e}^{-1}\circ\frac{1}{2}\pi_{\epsilon,e}< \mbox{Id}\,,
\ea\]
which in turn shows the theorem. $\blacksquare$

\section{Proof of Lemma \ref{lemma-SEA}}
\label{app-sec-proof-Pro1}

According to Lemmas \ref{lemma-1} and \ref{lemma-non-ex}, it is clear that there are two issues to be verified in order to finish the proof: (i) Assumption \ref{ass-PE-2} is satisfied and (ii) the function $\kappa_2(\cdot)$ of Lemma \ref{lemma-1} in the current setting satisfies $\kappa_{2}(s)\in\mathcal{O}(s)$ as $s\rightarrow 0^+$. The first issue guarantees that there exists $k_{dz}$ such that the $\tilde\theta$-subsystem with state $\tilde\theta$ and input $e_1$ is uniformly ISS with some gain function $\gamma_{\tilde\theta,1}$, while the latter guarantees that the resulting gain function $\gamma_{\tilde\theta,1}$ satisfies  $\gamma_{\tilde\theta,1}(s)\in\mathcal{O}(s)$ as $s\rightarrow 0^+$.

In light of these observations, we address the first issue and observe that
\vspace{-1.0em}
\[
\varsigma(d_r)\frac{\partial \phi(\theta,d_r)}{\partial\theta} = b_1  e^{b_2\theta_1} |d_r|^{\theta_2+p^\ast+2} \begin{pmatrix} b_2^2 & b_2 \log|d_r| \cr b_2 \log|d_r| & (\log|d_r|)^2\end{pmatrix}\,.
\vspace{-1.0em}
\]
This in turn indicates that (\ref{ineq-ND}) in Assumption \ref{ass-PE-2} is satisfied with the above choice of $\varsigma$ and matrix $M_1$ of the form
\vspace{-1.0em}
\[
M_1(d_r) = M_0\begin{pmatrix} b_2^2 & b_2 \log|d_r| \cr b_2 \log|d_r| & (\log|d_r|)^2\end{pmatrix}
\vspace{-1.0em}
\]
with an appropriately defined $M_0>0$.
Moreover, with $d_r(t)=\mbox{exp}(\sin t)$, the expression of $M_1(d_r(t))$ can be further elaborated as
\vspace{-1.0em}
\[
M_1(d_r(t)) = M_0\begin{pmatrix} b_2^2 & b_2 \sin t \cr b_2 \sin t & \sin^2 t\end{pmatrix}\,.
\]

\vspace{-2.1em}
Simple calculations then show that the PE condition (\ref{ineq-PE}) is also satisfied, which in turn indicates that Assumption \ref{ass-PE-2} is verified. In this way, according to Lemma \ref{lemma-non-ex}, we can find a $k_{zd}^\ast>0$ such that for all $k_{zd}>k_{zd}^\ast$, the above $\tilde\theta$ dynamics permits a uniform ISS Lyapunov function $V_{\tilde\theta}$ as in (\ref{V-1}), fulfilling (\ref{SEA-V-theta}) with the ISS gain function $\gamma_{\tilde\theta,1}$ of the form
\vspace{-1.0em}
\[
\gamma_{\tilde\theta,1} = a_1(\tau_{est} a^\ast l_{\gamma})^2 \kappa_2(\sqrt{s})^2\,.
\]

\vspace{-2.0em}
We now proceed to address the second issue, i.e., to verify the property of function $\kappa_2$, which is defined in Lemma \ref{lemma-1}. Note that the function $\varphi$ in Lemma \ref{lemma-1} is smooth in both $\theta$ and $e$ in the current setting. Hence, the resulting $\kappa_2$ takes the form  $\kappa_2(s) = s \kappa'_2(s)$
for some $\kappa_2'\in\mathcal{SN}$, which indicates $\gamma_{\tilde\theta,1}(s)\in\mathcal{O}(s)$ as $s\rightarrow 0^+$, i.e., there exists a function $\bar\gamma_{\tilde\theta,1}\in\mathcal{SN}$ such that $\gamma_{\tilde\theta,1}(s)\leq \bar \gamma_{\tilde\theta,1}(s) \cdot s$ for all $s\in\mathbb{R}_+$. The proof is thus completed. $\blacksquare$

\end{document}